\theoremstyle{definition}
\newtheorem{definition}{Definition}[section]
\theoremstyle{plain}
\newtheorem{proposition}{Proposition}[section]
\newtheorem{lemma}{Lemma}[section]
\newtheorem{theorem}{Theorem}[section]
\newtheorem{corollary}{Corollary}[section]
\theoremstyle{remark}
\let\oldthebibliography\thebibliography
\let\endoldthebibliography\endthebibliography
\title{Persistent homology of featured time series data and its applications\thanks{Preprint. Under review.}}
\author{
 \large Eunwoo Heo\thanks{Department of Mathematics, Pohang University of Science and Technology (POSTECH), Pohang, South Korea (hew0920@postech.ac.kr)} \And \large 
 Jae-Hun Jung\thanks{Department of Mathematics, POSTECH, Pohang, South Korea (jung153@postech.ac.kr)}
 }
\begin{document}

\maketitle
\begin{abstract}
Recent studies have actively employed persistent homology (PH), a topological data analysis technique, to analyze the topological information in time series data. 
Many successful studies have utilized graph representations of time series data for PH calculation.
Given the diverse nature of time series data, it is crucial to have mechanisms that can adjust the PH calculations by incorporating domain-specific knowledge. 
In this context, we introduce a methodology that allows the adjustment of PH calculations by reflecting relevant domain knowledge in specific fields.
We introduce the concept of \textit{featured time series}, which is the pair of a time series augmented with specific features such as domain knowledge, and an \textit{influence vector} that assigns a value to each feature to fine-tune the results of the PH. 
We then prove the stability theorem of the proposed method, which states that adjusting the influence vectors grants stability to the PH calculations.
The proposed approach enables the tailored analysis of a time series based on the graph representation methodology, which makes it applicable to real-world domains. 
We consider two examples to verify the proposed method's advantages: anomaly detection of stock data and topological analysis of music data.
\end{abstract}

\keywords{\small topological data analysis, persistent homology, time series analysis, featured time series, graph representation, stability theorem.}

\section{Introduction}
Data that can be represented as one-dimensional variables are called time series data; this data format is simple and widely prevalent across various fields.
Time series data analysis has been a long-standing field, including anomaly detection~\cite{braei2020anomaly, blazquez2021review}, forecasting~\cite{lim2021time, sezer2020financial, torres2021deep, lara2021experimental}, classification~\cite{bagnall2017great, susto2018time, ismail2019deep, abanda2019review}, and clustering~\cite{aghabozorgi2015time, maharaj2019time, ali2019clustering}.
Despite being extensively researched, time series analysis remains a challenging problem.
Recent advancements in topological data analysis (TDA) have led to a surge in research on analyzing time series data using topological information.
A prominent method within TDA is PH, wherein the duration for which a \(p\)-dimensional homology persists is calculated as the given data undergoes sequential transformations, thereby revealing the underlying topological structure.
Research analyzing time series data using PH can be broadly classified into two categories: studies utilizing delay embedding and those utilizing graph representation.

The delay embedding \(\phi_{\tau}\) for a time series \(T\), as described by Packard et al.~\cite{packard1980geometry}, is a function into \(\mathbb{R}^m\) defined by
\(
\phi_{\tau}(t) = (T(t), T(t-\tau), \ldots, T(t-(d-1)\tau)),
\)
where \(\tau\) is the delay lag and \(m\) is the embedding dimension. This embedding method is employed to transform time series data into point clouds (PCs) in Euclidean space \(\mathbb{R}^m\) for PH analysis. 
Takens' embedding theorem~\cite{takens2006detecting} ensures that this transformation preserves the topological information of the data sampled from hidden dynamical systems.
The initial approach was used to analyze dynamical systems in~\cite{Skraba2012TopologicalAO}, where the time series sampled from these systems were transformed into PCs for PH analysis. 
Subsequent research~\cite{perea2015sliding, perea2016persistent} expanded this foundation by affirming the effectiveness of PH in identifying the periodicity within a time series and extending its application to quasi-periodic systems.
Research on the early detection of critical transitions~\cite{berwald2013critical} based on~\cite{scheffer2009early} and analysis of the stability of dynamical systems~\cite{khasawneh2014stability} was conducted using PH and delay embedding. 
Moreover, PH and delay embedding methodologies were adopted for clustering~\cite{pereira2015persistent} and classification~\cite{tran2019topological} of time series data.
The effectiveness of PH in multivariate time series analysis for machine learning tasks such as room occupancy detection was demonstrated in~\cite{wu2022topological}.
The versatility of PH has been further highlighted by demonstrating its broad applicability to diverse fields, e.g., identifying wheezes in signals~\cite{emrani2014persistent}, detecting financial crises~\cite{gidea2018topological,ismail2022early}, classifying motions using motion-capture data~\cite{venkataraman2016persistent}, and distinguishing periodic biological signals from nonperiodic synthetic signals~\cite{perea2015sw1pers}. 

On the other hand, in addition to embedding methods,
several studies have successfully used graph representation of time series data with PH.
The graph representation \( G = (V, E) \) of a time series \( T \) consists of a vertex set \( V \) and an edge set \( E \), which are derived from the data points and their relationships within \( T \).
This graph is then analyzed using PH to study the topological features of \( T \).
Notably, functional magnetic resonance imaging (fMRI) data of the brain were converted into graph data, called functional networks in~\cite{stolz2017persistent}, which were then analyzed using PH. 
In~\cite{gidea2017topological}, the early signs of critical transitions in financial time series were detected by converting the series into graph data, called correlation networks, for PH analysis. 
In text mining, PH has been applied to graphs representing the main characters in the text time series~\cite{gholizadeh2018topological}. 
A study proposed the construction of graphs, called music networks, for classifying Turkish makam music through PH analysis~\cite{aktas2019classification}. 
These music networks were used in~\cite{tran2023topological} to analyze traditional Korean music using PH and in~\cite{tran2024machine}
to study composition by integrating machine learning.
In addition, the time series data have been transformed into Tonnetz, proposed by Euler, for music classification through PH analysis~\cite{bergomi2020homological}. 
The authors of~\cite{mijangos2024musical} addressed the classification of music style by creating data called intervallic transition graphs from music time series and applying PH.
Graph representation of time series is also crucial in recent artificial intelligence-based time series analysis utilizing graph neural networks (GNNs). 
The graph representation of time series data was employed for forecasting~\cite{cao2020spectral}, anomaly detection~\cite{deng2021graph}, and classification~\cite{zha2022towards} with GNNs.

Given the extensive and diverse nature of time series data, in addition to delay embedding, various other embedding methods such as derivative embedding~\cite{packard1980geometry}, integral-differential embedding~\cite{gilmore1998topological}, and global principal value embedding~\cite{broomhead1986extracting} have been used. 
Recently, selecting appropriate embeddings from these various embeddings using PH was investigated in~\cite{tan2023selecting}. 
Similarly, choosing the appropriate graph representation for each research field is also crucial.
However, further research is still needed on how to adjust and select suitable graph representations.
In this context, our study makes the following contributions:
\begin{enumerate}
    \item We introduce a novel concept of \textit{featured time series data} created by adding domain knowledge (features) to the time series data and \textit{influence vectors} that assign a value to each feature. This allows the adjustment of graph representations to ensure suitability to the specific domain.
    \item We prove that adjusting the graph representations via the influence vectors provides stability to the PH calculation, thereby demonstrating the robustness of the proposed method.
\end{enumerate}

The remainder of this paper is organized as follows.
Section~\ref{sec:Frequency-based PH analysis of time series data} describes the frequency-based graph representation and methods for calculating the PH of the graphs.
We provide an example of potential information loss that can occur during the PH computation process.
In Section~\ref{sec:Introduction of Featured Time Series Data}, we introduce the novel concepts of featured time series data and influence vectors to analyze a time series by incorporating domain knowledge.
Furthermore, we show that the examples of information loss presented in Section~\ref{sec:Frequency-based PH analysis of time series data} can be addressed by adjusting the influence vectors. 
In Section~\ref{sec:Stability theorem for influence vectors}, we prove the theorem that asserts the stability property of the influence vectors when subjected to such adjustments. 
This theorem underscores the reliability and robustness of the influence vector adjustments in mitigating information loss during the graph representation process. 
In Section~\ref{sec:Applications and experiments}, we explore the effect of variations in the influence vectors on the analysis of real-world time series data based on previous research, including anomaly detection of stock data and topological analysis of music data.
In Section~\ref{sec:Conclusion}, a brief concluding remark is provided.
\section{Frequency-based PH analysis of time series data}\label{sec:Frequency-based PH analysis of time series data} 
In this study, we addressed the graph representation of time series data using the frequency of occurrence of observations in the time series data.

\subsection{Frequency-based graph construction}\label{subsec:Frequency-based graph construction}

Consider a time series \( T : \mathbb{T} \rightarrow \mathcal{X} \), where \( \mathbb{T} \) is assumed finite. 
Define the vertex or node set \( V \) as 
\[ V = \{\{T(t)\} \mid t \in \mathbb{T} \}. \]
Enumerating \( \mathbb{T} \) as \( \{ t_1, t_2, \ldots, t_n\} \), the edge set \( E \) is defined as 
\[ E = \{ \{T(t_i), T(t_{i+1} ) \} \mid 1 \leq i \leq n-1 ,  T(t_i) \neq T(t_{i+1} ) \}. \]
Let \( f_e \) represent \textit{the frequency of an edge} \( e \in E\) in $T$ given by
\[ f_e = \Big\lvert \{t_i \in \mathbb{T} \mid e = \{T(t_i), T(t_{i+1} ) \} \} \Big\rvert. \]
Here, \( f_e \) measures the total number of co-occurrences of the two nodes appearing side by side in $T$ associated with the edge $e$. 
Note that when defining the frequency $f_e$ as above, we did not consider these two nodes' specific order of appearance. 
The frequency of the edge $e$ increases whenever the two nodes are positioned adjacent to each other in $T$.
Define a weight function \( W_E \) on $E$ as $W_E(e) = f_e $ for any edge \( e \in E\).
This weight measures how strongly the associated nodes are connected to each other in $e \in E$.

\subsection{Distance on weighted graphs}\label{subsec:Distance on weighted graphs}
To compute the PH over the graph, we define the distance between two nodes in the graph. 
In this study, we considered the definition of natural distance, which is given below. 
However, note that the distance definition is not unique but might be the best definition depending on the problem.

\begin{definition}[Distance]\label{def:distance}
Let $G=(V, E, W_E)$ be a connected weighted graph. Define the distance between $v$ and $w$ in $V$ such that 
$d(v,w) = 0$ if $v=w$; otherwise, 
\[
    d(v,w)= \min_p\left\{ \sum_{e \in p} (W_E(e))^{-1} \Bigg\lvert
\text{$p$ is a path in $G$ connecting $v$ and $w$} \right\}. 
\]
\end{definition}

Note that the distance $d$ satisfies the metric conditions.
The proof is provided in Proposition~\ref{prop: d_ is metric}, where a more general case is presented.
For $d$, we utilize the reciprocal of the edge weight function \(W_E\). 
This implies that we consider the vertices \( v \) and \( w \) connected by the edge \( e \) as closer when the frequency \( f_{e} \) of the edge is higher.
Define a \textit{distance matrix} $A$ as $A=(a_{ij})$, where $a_{ij}=d(v_i, v_j)$ for every pair of vertices $v_i, v_j \in V$.

\subsection{Persistent homology of metric spaces}\label{subsec:Persistent homology of metric spaces}
We now consider computing PH given a metric space $(V,d)$. 
Suppose we have a metric space $(V,d)$ from graph $G=(V, E, W_E)$.
The power set $\mathcal{P}(V)$ of the vertex set $V$ is an abstract simplicial complex, denoted as $\mathbb{X}$.
We define the Vietoris-Rips (Rips) filtration function 
$h : \mathbb{X} \rightarrow \mathbb{R} $ as 
\[
h(\sigma) = \max \{ d(v_i,v_j) \mid \tau \subseteq \sigma , \text{1-simplex }\tau=\{v_i, v_j\}  \}
\]
whenever $p \geq 1$ for $p$-simplex $\sigma \in \mathbb{X}$. 
For any $0$-simplex $v$, we define $h(v) = 0$.
Denote $h^{-1}((-\infty,\epsilon])$ as $\mathbb{X}_{\epsilon}$ for $\epsilon \in \mathbb{R}$.
Then, for $\epsilon_n \ge \max_\sigma(h(\sigma))$ and $0=\epsilon_0 \le \epsilon_1 \le \epsilon_2 \le \cdots \le \epsilon_n$, 
\[
    \mathbb{X}_{\epsilon_0} \subseteq \mathbb{X}_{\epsilon_1} \subseteq \cdots \subseteq \mathbb{X}_{\epsilon_n} = \mathbb{X}
\]
forms the Rips filtration for an abstract simplicial complex $\mathbb{X}$ (see~\cite{vietoris1927hoheren,gromov1987hyperbolic}).
A simplicial complex can be associated with a sequence of abelian groups, termed chain groups. 
For each dimension \( p \), the \(p\)-chains \(C_p(X)\) are the formal sums of the \(p\)-simplices.
For each dimension \( p \), there exists a boundary map
\( \partial_p : C_p(\mathbb{X}) \to C_{p-1}(\mathbb{X}) \)
that maps each \( p \)-simplex to its boundary. 
A crucial property is that the boundary of the boundary of the simplices is always null, that is, $\partial_{p-1} \circ \partial_p = 0.$
Given the chain groups and boundary maps, the \textit{\( p \)th homology group} is defined as the quotient group 
\[
    H_p(\mathbb{X}) = \frac{\ker(\partial_p)}{\text{Im}(\partial_{p+1})}.
\]
This quotient captures the \( p \)-dimensional holes in a given topological space.
As the filtration progresses, these homology groups also change.
The PH serves as an effective tool for tracing such changes.
For each dimension \( p \) and filtration value \( \epsilon_i \), we compute 
$H_p(\mathbb{X}_{\epsilon_i}).$
As \( \epsilon_i \) changes, certain homological features (e.g., loops) may emerge or disappear.
A persistence diagram \cite{edelsbrunner2002topological} (PD) is a multi-set that visualizes the coordinates of these features on a plane.
The \( x \)-coordinate of a point in the PD indicates the birth of such a homological feature, and the \( y \)-coordinate indicates its death. 
The longer the persistence of the considered feature, the greater the deviation of the corresponding point from the diagonal on the plane of the PD.
Note that the computation of PH depends on the filtration choice, but Rips filtration is one of the most popular filtration methods. 
Open-source libraries used for computing the PH based on Rips filtration include JavaPlex~\cite{adams2014javaplex} written in Java but easily usable in MATLAB, GUDHI~\cite{maria2014gudhi} written in C++ but also accessible in Python, and the recently developed Ripser~\cite{bauer2021ripser}, which significantly reduces the computation time and memory usage when compared with the other methods.
Rip filtration allows the computation of PH not only on point clouds but also on general abstract simplicial complexes, especially graphs.
In \cite{attali2011vietoris}, the authors demonstrated that Rips filtration could approximate the topological shapes of the data's manifolds, as discussed in \cite{dey2022computational}.
 
\subsection{Discussion of information loss through an example}\label{subsec:Discussion of Information loss through an example}

Let us examine a simple example to see how PH computation for time series analysis of a specific field can result in information loss. 
Here, we discuss a simple case of anomaly detection in a time series.
Consider a time series \(T\) consisting of \(27\) timestamps for temperature changes, as shown in Figure~\ref{fig:Time_temperature_nofeat}. 
We can construct a graph from \(T\), as shown in Figure~\ref{fig:graph_distancematrix}. 
In the graph, each vertex represents a temperature value, and the value associated with the edge represents the co-occurring frequency of the two adjacent temperature values in \(T\). 
From the definition of the distance \(d\), the distance matrix is obtained as shown in Figure~\ref{fig:graph_distancematrix}. 
The matrix's rows and columns correspond to the temperature values associated with the vertices \(21, 22, 23, 24\) and \(25\). 
For example, the element at the position of $(2,3)$ in the matrix represents the distance between the vertices $22$ and $23$.
The complexity of computing the distance matrix depends on the complexity of the pathfinding process $O(\lvert E \rvert \log \lvert V \rvert)$, which can be efficiently calculated using the Dijkstra algorithm~\cite{dijkstra1959note}.

\begin{figure}[ht]
\centering
\includegraphics[width=\linewidth]{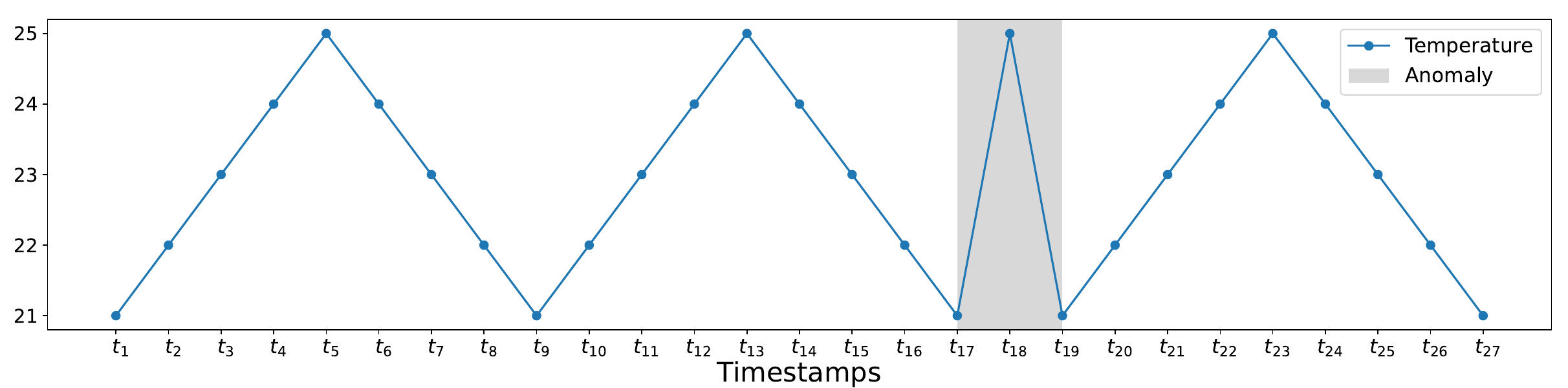}
\caption{A time series of temperature with the anomaly region shaded.}
\label{fig:Time_temperature_nofeat}
\end{figure}

\begin{figure}[ht]
\centering
\includegraphics[width=0.7\linewidth]{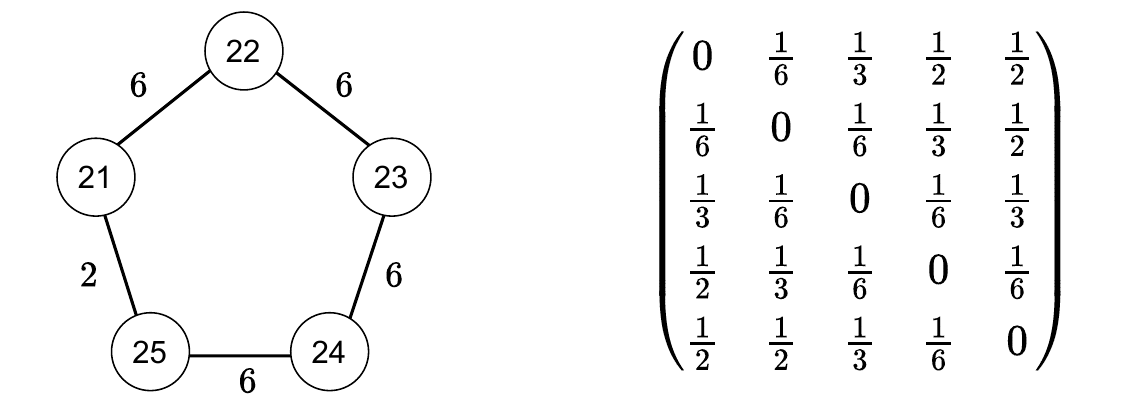}
\caption{Weighted graph $G=(V,E,W_E)$ of the time series $T$ in Figure~\ref{fig:Time_temperature_nofeat} (left) and the corresponding distance matrix (right).}
\label{fig:graph_distancematrix}
\end{figure}

\begin{figure}[ht]
    \centering
    \includegraphics[width=\linewidth]{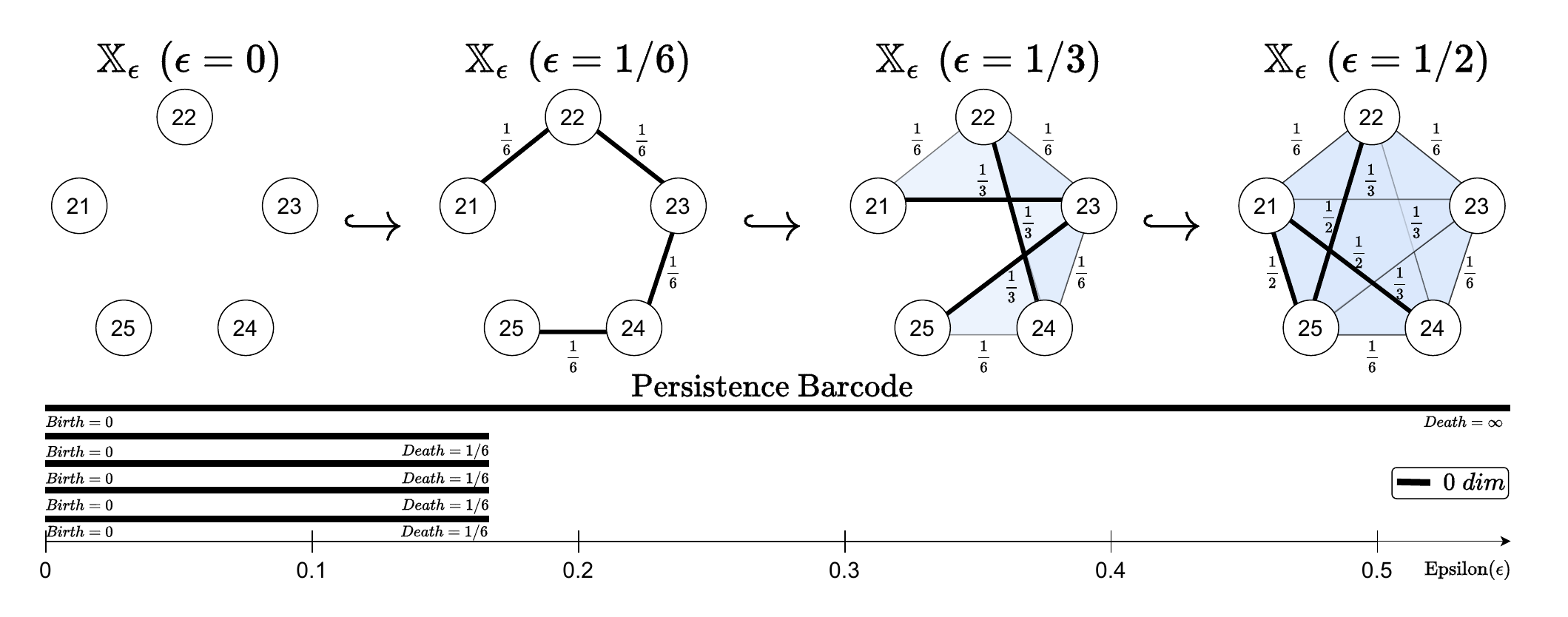}
    \caption{Rips filtration (Top) and its persistence barcode (Bottom).}
    \label{fig:Rips_complex}
\end{figure}

We construct the Rips filtration for the distance matrix and compute the persistence barcode.
The upper row of Figure~\ref{fig:Rips_complex} illustrates how the Rips complex changes in the Rips filtration as the filtration value $\epsilon$ varies.
The bottom row shows the corresponding \(0\)-dimensional persistence barcode.
The corresponding persistence barcode shown in Figure~\ref{fig:Rips_complex} illustrates the occurrence of information loss during filtration.
The time series \(T\) in Figure~\ref{fig:Time_temperature_nofeat} undergoes abnormal temperature changes in the shaded area, where temperatures (nodes) $21$ and $25$ are connected twice. 
As shown in Figure~\ref{fig:Rips_complex},
the information regarding the frequency between nodes \(21\) and \(25\) in the anomaly region and its distance \(\frac{1}{2}\) are not reflected in the computation of homology. 
The birth and death of the \(0\)-dimensional homology occur at \(0\), \(\frac{1}{6}\), and \(\infty\), with no occurrence at \(\frac{1}{2}\).  Moreover, the \(1\)-dimensional homology is not detected either.
This is because the interior of the pentagon is filled in advance at \(\frac{1}{3}\) preceding \(\frac{1}{2}\). 
Therefore, if the change in temperature from $21$ to $25$ is considered an anomaly, then it must be reflected in this loss of information. 
For example, instead of simply increasing the frequency by \(1\) at each appearance of \(21\) and \(25\) in $T$, we can assign a larger value.
This can generate $1$-dimensional homology because if the distance $d(21,25)$ becomes smaller than $\frac{1}{3}$, a \(1\)-cycle is formed before any interior edge of the pentagon is created.
In addition, if the distance $d(21,25)$ becomes less than \(\frac{1}{6}\), the edge \(\{21,25\}\) will be the first to emerge in the filtration; thus, $d(21,25)$ becomes the death time in the \(0\)-dimensional barcode.
This is discussed in detail in Example~\ref{example:improve_infoloss} later.
These examples underscore the critical need for domain-specific adjustments in analysis to accurately perform PH computations for time series data within particular fields. 
\section{Introduction of featured time series data}\label{sec:Introduction of Featured Time Series Data}

We propose the featured time series data below to enable adjustments in the PH calculation that reflect each field's domain knowledge.

\subsection{Featured time series data}\label{subsec:Introduction of Featured Time Series Data}

\begin{definition}[Feature set]
    Let $F^p$ denote finite sets, where $p=0,1$. 
    The Cartesian product of the power set of $F^p$ is called the \textit{ feature set}, denoted by $\mathcal{F}$.
    That is, $\mathcal{F}= \mathcal{P}(F^0) \times \mathcal{P}(F^1).$
    We call $F^p$ as the \textit{$p$th feature set}.
    The elements of $F^p$ are called the $p$th \textit{features}.
\end{definition}

As demonstrated below, the $p$th feature is associated with the \(p\)-simplex in a graph in our proposed method.

\begin{definition}[Influence vectors]
   Consider a non-negative real value function $g: F^0 \cup F^1 \cup \{ \emptyset^0, \emptyset^1 \} \rightarrow \mathbb{R}$, where the symbols $\emptyset^0$ and $\emptyset^1$ represent the states of the given time series without any feature. This function $g$ is called the influence vector with the size of $\lvert F^0 \rvert + \lvert F^1 \rvert + 2$.
   For any element $z$ in $ F^0 \cup F^1 \cup \{ \emptyset^0, \emptyset^1 \}$, the value $g(z)$ is referred to as the influence of $z$.
\end{definition}

\begin{definition}[Featured time series]\label{def:Featured time series}
    Let \(T: \mathbb{T} \rightarrow \mathcal{X}\) be a time series, \(\mathcal{F}\) be a feature set, and \(g\) be an influence vector.
    Consider a function \(\widehat{T}: \mathbb{T} \rightarrow \mathcal{X} \times \mathcal{F}\) such that $\widehat{T}(t) = (T(t), T_f(t)) \in \mathcal{X} \times \mathcal{F}$ for $t \in \mathbb{T}$, where $T_f:\mathbb{T} \rightarrow \mathcal{X}$ represents any function.
    The pair \((\widehat{T},g)\) is called a featured time series of $T$ by $g$. 
    The function $T_f$ is termed the feature component of the $\widehat{T}$.
\end{definition}

For example, let $F^0 = \{r_1, r_2, r_3\}$ and $F^1 = \{s_1, s_2\}$ represent the zeroth and first feature sets, respectively.
Consider a time series $T: \mathbb{T} \rightarrow \mathcal{X}$ such that $T(t_i) = x_{t_i}$
, where $\mathbb{T} = \{ t_i \mid i \in \mathbb{N} \}$.
The following sequence illustrates an example of $\widehat{T}$:
\[
(x_{t_1},\{r_1,r_3\},\{s_1,s_2\}), \; (x_{t_2},\{r_2,r_3\},\emptyset^1 ), \; \ldots, \; (x_{t_i},A_{t_i} ,B_{t_i}), \; \ldots ,
\]
where $A_{t_i} \subseteq \{r_1,r_2,r_3 \}$ and $B_{t_i} \subseteq \{s_1,s_2\}$. 
For any influence vector $g$, $(\widehat{T},g)$ is a featured time series.

\subsection{Graph representation of featured time series data}\label{subsec:Graph representation of featured time series data}

Consider a featured time series \((\widehat{T},g)\) as defined in Definition~\ref{def:Featured time series}.
A connected graph $G=(V,E,W_E)$ is constructed using $T$ as explained in Section~\ref{subsec:Frequency-based graph construction}.
The main goal of the proposed method is to generalize the weight function $W_E$ in $G$, yielding a more flexible and adaptive PH analysis by fully utilizing the given information in the time series. 
For such generalization, we use $T_f$ and $g$.

Consider two time series $T_f^0: \mathbb{T} \rightarrow \mathcal{P}(F^0)$ and $T_f^1: \mathbb{T} \rightarrow \mathcal{P}(F^1)$ such that 
$T_f(t) = (T_f^0(t),T_f^1(t))$ for any $t \in \mathbb{T}$.
Write $F^0 = \{r_1, \ldots, r_m \}$ and $V = \{ v_1, v_2, \ldots, v_{\lvert V \rvert} \}$ as ordered sets.
Define the \textit{zeroth count matrix} $C_0 = (c^0_{ij})$ using $F^0$ such that 
\begin{equation*}
c^0_{ij} \coloneqq
\begin{cases}
    \big\lvert \{t \in \mathbb{T} \mid v_i = T(t), r_j \in T_f^0(t) \} \big\rvert & \text{ if } j = 1, \ldots, m \\
    \big\lvert \{t \in \mathbb{T} \mid v_i = T(t), T_f^0(t) = \emptyset^0 \} \big\rvert & \text{ if } j = 0 \\
\end{cases},
\end{equation*}
where $\big\lvert \cdot \big\rvert$ denotes the cardinality of the set.
The $c^0_{ij}$ represents the total number of timestamps $t \in \mathbb{T}$ 
when the vertex $v_i$ in $V$ and feature $r_j$ in $F^0$ are encountered together in $\widehat{T}$.
The $i$th element of the first column of $C_0$ represents the total number of timestamps when $v_i$ appears without any zeroth features. 

Similarly, write $F^1 = \{s_1, \ldots, s_l \}$ and $E = \{ e_1, e_2, \ldots, e_{\lvert E \rvert} \}$ as ordered sets and define the \textit{first count matrix} $C_1 = (c^1_{ij})$ using $F^1$ such that 
\begin{equation*}
c^1_{ij} \coloneqq
\begin{cases}
    \big\lvert \{t_k \in \mathbb{T} \mid e_i = \{ T(t_k), T(t_{k+1}) \} , s_j \in T_f^1(t_k) \} \big\rvert &  \text{ if } j = 1, \ldots, l \\
    \big\lvert \{t_k \in \mathbb{T} \mid e_i = \{ T(t_k), T(t_{k+1}) \} , T_f^1(t_k) = \emptyset^1 \} \big\rvert &  \text{ if } j = 0 \\
\end{cases}.
\end{equation*}
The $c^1_{ij}$ represents the total number of timestamps $t \in \mathbb{T}$ 
when the edge $e_i$ and feature $s_j$ in $F^1$ are encountered together in $\widehat{T}$.
The $i$th element of the first column of $C_1$ represents the total number of the timestamps when $e_i$ appears without any first features. 

Set \(\vec{g_0}=(g(\emptyset^0),g(r_1),g(r_2),\ldots,g(r_m) )\) and \(\vec{g_1}=(g(\emptyset^1), g(s_1), g(s_2),\ldots, g(s_l))\) for an influence vector $g$. 
Consider two vectors $ C_0 \cdot \vec{g_0}$ and $ C_1 \cdot (\vec{g_1}+\vec{1})$.
Here, $\cdot$ denotes matrix multiplication, and $\vec{1}$ means the vector whose elements are all $1$.
The use of $\vec{1}$ in $C_1 \cdot (\vec{g_1}+\vec{1})$ is to ensure that, when $\vec{g_1} = 0$, the value $(C_1 \cdot \vec{1})_i$ aligns with the frequency $f_e$ defined in Section~\ref{subsec:Frequency-based graph construction} for edge $e$ in $E$.

Define a vertex weight function $\widehat{W}_V : V \rightarrow \mathbb{R}$ as $\widehat{W}_V({v_i}) = (C_0 \cdot \vec{g_0})_i$ for ${v_i} \in V$ and an edge weight function $\widehat{W}_E : E \rightarrow \mathbb{R}$ as $\widehat{W}_E(e_i) = ( C_1 \cdot (\vec{g_1}+\vec{1}))_i$ for $e_i \in E$.
Write $\widehat{W}_E(e_i)$ as $\widehat{f}_{e_i}$.
The weight $\widehat{f}_e$ is called \textit{the weighted frequency of an edge} $e$.
It is a generalization of the frequency explained in Section \ref{subsec:Frequency-based graph construction}. 
The weight $\widehat{W}_V(v)$ is called \textit{the weighted frequency of a vertex} $v$ associated with the zeroth features, $r_1, r_2, \ldots, r_m$.
When $\vec{g_0} = \vec{1}$ , the $\widehat{W}_V(v)$ is simply the frequency of appearance of vertex $v$ in $\widehat{T}$.

We then obtain the vertex- and edge-weighted graph $\widehat{G}^g=(V,E,\widehat{W}_V,\widehat{W}_E)$ from the featured time series $(\widehat{T},g)$.
These weight functions are used to define the metric space in the following section.

\subsection{Distance on vertex-edge weighted graphs}\label{subsec:Distance on vertex-edge weighted graphs}

Suppose that we have a graph $\widehat{G}^g=(V,E,\widehat{W}_V,\widehat{W}_E)$.
Let $\alpha = \min_{e \in E} (\widehat{W}_E(e))^{-1}$. 
Take an increasing function $\rho : \mathbb{R} \rightarrow (-1,1)$ into the open interval $(-1,1)$ such that $\rho(0)=0$.
We call $\rho$ an \textit{activation function}.
Define the \textit{length function} $L^g : E \rightarrow \mathbb{R}$ of the graph $\widehat{G}^g$  as 
\[
L^g(e) = (\widehat{W}_E(e))^{-1} - \alpha \left( \rho(\widehat{W}_V(a) + \widehat{W}_V(b)) \right) \text{ for any edge } e = \{a, b\} \in E.
\]
The distance $\widehat{d}$ of the graph $\widehat{G}^g$ is defined below.
\begin{definition} [Distance]\label{def:distance_weightedgraph}
Let \( v, w \in V \) be any vertices.
If \( v = w \), define \( \widehat{d}(v, w) = 0 \). Otherwise, the distance $\widehat{d}$ between $v$ and $w$ is defined as
\[ \widehat{d}(v,w) = \min\left\{ 
\sum_{\substack{e \in p}} L^g(e) \Big\lvert
\text{$p$ is a path in $G$ from $v$ to $w$}
\right\}.
\]
\end{definition}

The motivation for defining distance $\widehat{d}$ is as follows.
Consider an edge \(e = \{a,b\}\) in the graph $\widehat{G}^g$. 
As discussed in Section~\ref{subsec:Distance on weighted graphs}, we fundamentally use the reciprocal of $\widehat{W}_E$ for the distance $\widehat{d}$. 
The stronger the relevance between $a$ and $b$, the closer they should be positioned. 
In particular, compared with $W_E$, $\widehat{W}_E$ is a function that further assigns the sum of the influences on the features, meaning the more frequent the appearance of the first feature, which has a significant influence on the featured time series $\widetilde{T}$, the stronger the connection becomes.
Additionally, we used the vertex weight function $\widehat{W}_V$ for the distance $\widehat{d}$.
This is designed to reflect the influence of a single node $a$ on the other nodes in the entire network. 
The greater the vertex weight assigned to $a$, the shorter the distance from $a$ to all other nodes.
For example, consider a social network \(G\) in which node \(a\) is a famous influencer in the network and node $b$ is connected to $a$.
Even if there is no direct interaction between nodes \(a\) and \(b\), it is natural to consider the distance between them to be small if \(a\)'s influence is strong.
To achieve this, in the definition of the length function \(L^g\), we subtract the sum of the influences of nodes \(a\) and \(b\) from \(\widehat{W}_E(e)\), resulting in \(\widehat{W}_E(e) - (\widehat{W}_V(a) + \widehat{W}_V(b))\).
However, as $\widehat{W}_V$ and $\widehat{W}_E$ are independent functions, they can be negative. 
Therefore, we ultimately define the length \(L^g\) for any edge \(e = \{a, b\} \in E\) as 
\[
L^g(e) = (\widehat{W}_E(e))^{-1} - \alpha \left( \rho(\widehat{W}_V(a) + \widehat{W}_V(b)) \right).
\]
Figure~\ref{fig:vertex_distance_updated} presents a schematic illustration of how the graph $\widehat{G}^g=(V, E,\widehat{W}_V,\widehat{W}_E)$ changes as we consider more general vertex weights $\widehat{W}_V$ and edge weights $\widehat{W}_E$.

\begin{figure}[ht]
    \centering
    \includegraphics[width=0.95\textwidth]{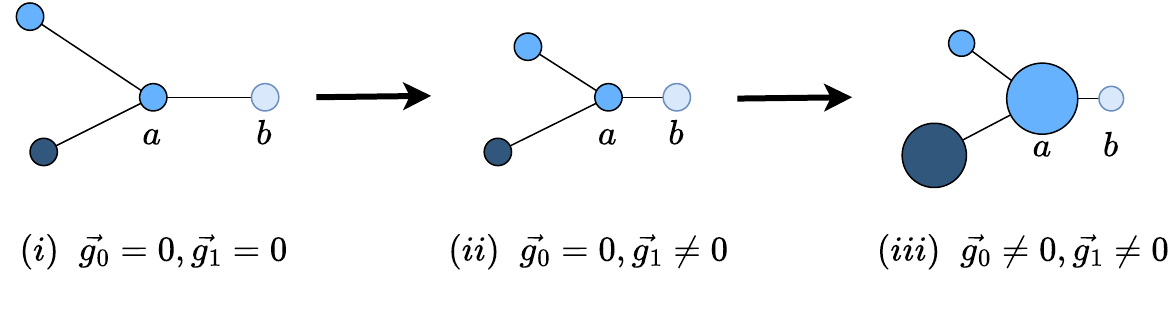}
    \label{fig:L_motive.drawio}
    \caption{
    Schematic illustrations of the weighted graph $\widehat{G}^g=(V, E, \widehat{W}_V, \widehat{W}_E)$ for various influence vectors $g$.
    (i) represents the case where $\widehat{W}_V = 0$ and $\widehat{W}_E = W_E$, (ii) shows the changes when only the edge weight is varied, and (iii) illustrates the changes in the vertex weights from those in (ii).
    }
    \label{fig:vertex_distance_updated}
\end{figure}

The distance $\widehat{d}$ satisfies the metric conditions.
\begin{proposition}\label{prop: d_ is metric}
Let \(\widehat{d}\) be the distance as defined in Definition~\ref{def:distance_weightedgraph}. Then, \(\widehat{d}\) is a metric.
\end{proposition}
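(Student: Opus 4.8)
The plan is to reduce the whole statement to a single key fact — that the length function $L^g$ is strictly positive on every edge — after which $\widehat{d}$ is just the usual shortest-path metric and the verification is routine. Along the way I would first record that the minimum in Definition~\ref{def:distance_weightedgraph} is attained: $G$ is finite and connected, so for $v\neq w$ there is at least one simple path from $v$ to $w$, and there are only finitely many such paths; once we know $L^g>0$, it suffices to take the minimum over simple paths, so the quantity $\widehat{d}(v,w)$ is well defined.

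The crux is positivity of $L^g$. For an edge $e=\{a,b\}\in E$, every entry of $C_0$ and $C_1$ is a cardinality, hence nonnegative, and every coordinate of $\vec{g_0}$ and of $\vec{g_1}+\vec{1}$ is nonnegative — indeed every coordinate of $\vec{g_1}+\vec{1}$ is at least $1$ because $g\ge 0$. Since $e$ is a genuine edge it arises from at least one timestamp of $T$, so the row of $C_1$ indexed by $e$ has a positive entry; therefore $\widehat{W}_E(e)=(C_1\cdot(\vec{g_1}+\vec{1}))_e\ge 1>0$. Consequently $(\widehat{W}_E(e))^{-1}$ is defined and positive, $\alpha=\min_{e'\in E}(\widehat{W}_E(e'))^{-1}>0$, and $(\widehat{W}_E(e))^{-1}\ge\alpha$. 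Since $\rho$ takes values in the open interval $(-1,1)$, we get $\alpha\,\rho(\widehat{W}_V(a)+\widehat{W}_V(b))<\alpha\le(\widehat{W}_E(e))^{-1}$, whence
\[
L^g(e)=(\widehat{W}_E(e))^{-1}-\alpha\,\rho\bigl(\widehat{W}_V(a)+\widehat{W}_V(b)\bigr)>(\widehat{W}_E(e))^{-1}-\alpha\ge 0.
\]
One may further note that $\widehat{W}_V\ge 0$ and $\rho(0)=0$, so the correction term actually lies in $[0,\alpha)$, but only the bound $<\alpha$ is needed.

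With $L^g>0$ in hand, the metric axioms follow in the standard way. Non-negativity: $\widehat{d}(v,w)$ is either $0$ or a finite sum of positive numbers. Identity of indiscernibles: $\widehat{d}(v,v)=0$ by definition, and for $v\neq w$ every path uses at least one edge, so $\widehat{d}(v,w)\ge\min_{e\in E}L^g(e)>0$. Symmetry: each edge $e=\{a,b\}$ is unordered and $L^g(e)$ depends only on $\widehat{W}_E(e)$ and the unordered pair $\{\widehat{W}_V(a),\widehat{W}_V(b)\}$, so reversing a path from $v$ to $w$ produces a path from $w$ to $v$ of equal total length; the two sets of path-lengths coincide and hence so do their minima. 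Triangle inequality: concatenating a minimizing path from $u$ to $v$ with one from $v$ to $w$ gives a walk from $u$ to $w$; deleting its cycles (each contributing nonnegative length, since $L^g>0$) leaves a path from $u$ to $w$ of length at most $\widehat{d}(u,v)+\widehat{d}(v,w)$, so $\widehat{d}(u,w)\le\widehat{d}(u,v)+\widehat{d}(v,w)$.

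The only non-routine point is the positivity of $L^g$: the vertex-weight correction $-\alpha\,\rho(\widehat{W}_V(a)+\widehat{W}_V(b))$ can be negative, and without the normalization by $\alpha$ it could dominate the edge term $(\widehat{W}_E(e))^{-1}$ and render some edge length non-positive, which would break both the identity-of-indiscernibles axiom and the cycle-deletion argument for the triangle inequality. The definition is arranged precisely so that $\alpha$ — the smallest reciprocal edge weight — together with the bound $|\rho|<1$ prevents this; everything else is the familiar fact that shortest paths under positive edge weights define a metric.
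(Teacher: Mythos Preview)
Your proof is correct and follows essentially the same route as the paper's: reduce everything to the positivity of $L^g$, then verify the metric axioms by the standard path-reversal and path-concatenation arguments. You are in fact more careful than the paper, which merely asserts that $L^g$ is non-negative without justification; you supply the key inequality $\alpha\,\rho(\cdot)<\alpha\le(\widehat{W}_E(e))^{-1}$ and also handle the walk-versus-path subtlety in the triangle inequality explicitly.
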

\begin{proof}
Let \( v, w, z \in V \) be any vertices. 
If \( v = w \), we have \( \widehat{d}(v, w) = 0 \) by Definition~\ref{def:distance_weightedgraph}. 
In addition, the length function \(L^g\) is non-negative; therefore so \(\widehat{d}(v,w) = 0\) implies \(v = w\).
Suppose that \( v \neq w \). 
For any path \( p \) from \( v \) to \( w \), we can create a path \( p^{-1} \)  from \( w \) to \( v \) by simply reversing the order of the vertices in \( p \). 
Therefore, the distance $\widehat{d}$ satisfy \( \widehat{d}(v, w) = \widehat{d}(w, v) \). 
Finally, let us prove the condition of triangle inequality. 
Denote the path satisfying the minimality in the definition of \( \widehat{d}(v, z) \) as \( p_1 \), and the path in \( \widehat{d}(z, w) \) as \( p_2 \). 
Consider a path \( p_1 \cup p_2 \), which is obtained by concatenating \( p_1 \) and \( p_2 \).
Sum all values \( L^g(e) \) according to any edge $e$ in path $p_1 \cup p_2$, then it is $\widehat{d}(v, z) + \widehat{d}(z, w)$.
Because \( p_1 \cup p_2 \) is a path from \( v \) to \( w \), by the property of minimality in the definition of \( \widehat{d}(v, w) \), we have \( \widehat{d}(v, w) \leq \widehat{d}(v, z) + \widehat{d}(z, w) \). 
\end{proof}

\begin{corollary}
 $(V,\widehat{d})$ is a metric space for graph $\widehat{G}^g = (V,E,\widehat{W}_V,\widehat{W}_E)$
\end{corollary}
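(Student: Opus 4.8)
The plan is to obtain the corollary as an immediate consequence of Proposition~\ref{prop: d_ is metric}. That proposition already verifies every metric axiom for $\widehat{d}$ on the vertex set $V$: symmetry, both directions of the identity of indiscernibles, and the triangle inequality. So the only thing left to justify is that $\widehat{d}$ is genuinely a function $V \times V \to \mathbb{R}_{\ge 0}$, i.e.\ that the minimum in Definition~\ref{def:distance_weightedgraph} is taken over a nonempty set and is attained, so that $\widehat{d}(v,w)$ is a well-defined finite real number for all $v, w$.

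First I would recall that $\widehat{G}^g$ is constructed from the time series $T$ exactly as in Section~\ref{subsec:Frequency-based graph construction}, hence connected; therefore for any $v, w \in V$ there is at least one path in $\widehat{G}^g$ from $v$ to $w$, so the collection over which the minimum is formed is nonempty. Since $V$ is finite, there are only finitely many vertex-simple paths between $v$ and $w$; and because $L^g$ is non-negative (indeed strictly positive), any walk that repeats a vertex has length no smaller than the simple path obtained by deleting the resulting loop. Thus the infimum coincides with a minimum over a finite set and is a finite non-negative real number, giving a well-defined $\widehat{d} : V \times V \to \mathbb{R}_{\ge 0}$. Combined with Proposition~\ref{prop: d_ is metric}, this yields that $(V, \widehat{d})$ is a metric space.

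For completeness I would also spell out the positivity of $L^g$ that is used both here and implicitly in the proof of Proposition~\ref{prop: d_ is metric}: since $g$ is non-negative, $\vec{g_1} + \vec{1} \ge \vec{1} > 0$ and $C_1$ has non-negative integer entries with each edge-row summing to at least $1$ (every edge of $E$ is realized by some adjacent timestamp pair in $T$ by construction), so $\widehat{W}_E(e_i) = (C_1 \cdot (\vec{g_1}+\vec{1}))_i \ge 1$ and hence $0 < \alpha \le (\widehat{W}_E(e))^{-1} \le 1$; as $\rho$ takes values in $(-1,1)$, the term $\alpha\,\rho(\widehat{W}_V(a)+\widehat{W}_V(b))$ lies in $(-\alpha,\alpha)$, whence $L^g(e) > (\widehat{W}_E(e))^{-1} - \alpha \ge 0$.

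I do not expect a real obstacle, since the substance is entirely carried by Proposition~\ref{prop: d_ is metric}; the only point deserving care is the finiteness and well-definedness of $\widehat{d}$, which rests on the connectedness of $\widehat{G}^g$ together with the strict positivity of $L^g$, and the only place to be watchful is the claim $\widehat{W}_E(e) \ge 1$, which in turn relies on each edge being witnessed by at least one consecutive pair of timestamps in $T$.
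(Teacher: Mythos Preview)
Your proposal is correct and matches the paper's approach: the paper states the corollary without proof, treating it as immediate from Proposition~\ref{prop: d_ is metric}. Your additional verification of well-definedness (connectedness giving a nonempty path set, finiteness via reduction to simple paths, and strict positivity of $L^g$) fills in details the paper leaves implicit, and the positivity argument you give is exactly the one needed.
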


The distance \(\widehat{d}\) defined on \(\widehat{G}^g\) can be considered a generalization of the distance \(d\) from Definition~\ref{def:distance} from the perspective of the following proposition:
\begin{proposition}\label{prop: d and d_ are same}
  If the influence vector \( g \) is zero, then the two distances \( d \) in Definition~\ref{def:distance} and \( \widehat{d} \) in Definition~\ref{def:distance_weightedgraph} are identical on \( V \).
\end{proposition}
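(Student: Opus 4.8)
The plan is to show that when $g = 0$ the length function $L^g$ degenerates to exactly the per-edge quantity $(W_E(e))^{-1}$ appearing in Definition~\ref{def:distance}; once that is established, $d$ and $\widehat{d}$ coincide automatically, since both are the minima of the same sum taken over the same collection of paths between the two given vertices.

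First I would specialize the construction of $\widehat{G}^g$ to the case $g = 0$. Because $g$ vanishes on $F^0 \cup F^1 \cup \{\emptyset^0, \emptyset^1\}$, both $\vec{g_0}$ and $\vec{g_1}$ are the zero vector. Hence $\widehat{W}_V(v_i) = (C_0 \cdot \vec{g_0})_i = 0$ for every $v_i \in V$, so the vertex weight is identically zero, and $\widehat{W}_E(e_i) = (C_1 \cdot (\vec{g_1} + \vec{1}))_i = (C_1 \cdot \vec{1})_i$. I would then invoke the remark accompanying the definition of $C_1$ — the very reason $\vec{1}$ was inserted — to identify $(C_1 \cdot \vec{1})_i$ with the edge frequency $f_{e_i} = W_E(e_i)$. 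If one wants this spelled out: summing the entries $c^1_{ij}$ over $j \in \{0, 1, \ldots, l\}$ counts each timestamp $t_k$ with $e_i = \{T(t_k), T(t_{k+1})\}$ exactly once, so the row sum equals $\bigl\lvert \{t_k \in \mathbb{T} \mid e_i = \{T(t_k), T(t_{k+1})\}\} \bigr\rvert = f_{e_i}$.

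Next I would evaluate $L^g$. For any edge $e = \{a, b\} \in E$ we have $\widehat{W}_V(a) + \widehat{W}_V(b) = 0$, and since the activation function $\rho$ satisfies $\rho(0) = 0$, the correction term $\alpha\,\rho(\widehat{W}_V(a) + \widehat{W}_V(b))$ vanishes regardless of the value of $\alpha$; hence $L^g(e) = (\widehat{W}_E(e))^{-1} = (W_E(e))^{-1}$ for every $e$. To conclude: for $v = w$ both $d$ and $\widehat{d}$ are $0$ by definition, and for $v \neq w$ Definition~\ref{def:distance} and Definition~\ref{def:distance_weightedgraph} both take the minimum of $\sum_{e \in p}(\cdot)$ over the same set of paths $p$ in $G$ joining $v$ and $w$, with identical summands; therefore the minima agree and $d(v, w) = \widehat{d}(v, w)$ for all $v, w \in V$. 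I do not anticipate a real obstacle here; the only step that deserves an explicitly written line is the row-sum identity $(C_1 \cdot \vec{1})_i = f_{e_i}$, everything else being direct substitution together with $\rho(0) = 0$.
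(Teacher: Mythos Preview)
Your proposal is correct and follows essentially the same approach as the paper's own proof: both show $\widehat{W}_V\equiv 0$ from $\vec{g_0}=\vec{0}$, use $\rho(0)=0$ to kill the vertex correction in $L^g$, identify $(C_1\cdot\vec{1})_i$ with $f_{e_i}=W_E(e_i)$ so that $\widehat{W}_E=W_E$, and then conclude that the two path minima agree. Your write-up is slightly more explicit in spelling out the row-sum identity, but the logical structure is the same.
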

\begin{proof}
Because \(\vec{g_0}\) is zero, the vertex weight function \(\widehat{W}_V\) is zero. 
From \(\rho(0) = 0\), we get 
\[
    \sum_{\substack{ e \in p}} L^g(e)
    = \sum_{\substack{e = \{a,b\}\\ e \in p}}
    \left( (\widehat{W}_E(e))^{-1} - \alpha \rho(\widehat{W}_V(a) + \widehat{W}_V(b)) \right) 
    = \sum_{\substack{ e \in p}} (\widehat{W}_E(e))^{-1}.
\]
As \(\vec{g_1}\) is zero, it follows that \( \widehat{f}_{e_i} = (C_1 \cdot (\vec{g_1} + \vec{1}) )_i= (C_1 \cdot \vec{1})_i = f_e \) 
leading to \(\widehat{W}_E(e) = \widehat{f}_e = f_e = W_E(e)\).
Consequently, by the definition of \(\widehat{d}\), we obtain 
\begin{align*}
    \widehat{d}(v,w) 
    &= \min\left\{\sum_{\substack{ e \in p}} L^g(e) \bigg| p \text{ is a path between } v \text{ and } w\right\} \\
    &= \min\left\{\sum_{\substack{ e \in p}} ({W}_E(e))^{-1}  \bigg| p \text{ is a path between } v \text{ and } w\right\} = d(v,w).
\end{align*}
\end{proof}

\subsection{Activation function}\label{subsection:The auto activation function}
The role of \( \rho \) is to ensure that the values of the sum of any two vertex weights exist between $0$ and $1$ in the ascending order. 
If \( \rho \) is not appropriately chosen, all the elements of \( \rho(\widehat{W}_V(a) + \widehat{W}_V(b))   \) in Definition \ref{def:distance_weightedgraph} may become nearly identical for any edge $e=(a,b)$.
It is crucial to select \( \rho \) so that the distribution of the elements of \( \rho(\widehat{W}_V(a) + \widehat{W}_V(b)) \) facilitates effective analysis.
Although the choice of the activation function can vary depending on the problem, 
this process can also be efficiently automated.
We Consider the following activation function: 
$$
\rho(z) = 
\begin{cases} 
1 - e^{-z^2} & \text{if } z \geq 0, \\
0 & \text{if } z < 0.
\end{cases}
$$
Let \(M\) be the maximum value of \(\{ \widehat{W}_V(v_i) + \widehat{W}_V(v_j) \mid i \neq j \}\), where \(M \geq 0\).
Define \textit{the automatic activation function} \(\rho_*(z)\) as \(\rho_*(z) = \rho\left(\frac{2z}{M+1}\right)\).

\begin{proposition}\label{prop:auto_activation}
The function \(\rho_*(z)\) is Lipschitz continuous with a constant that is independent of the influence vector $g$, although \(M\) depends on \(\widehat{W}_V\).
\end{proposition}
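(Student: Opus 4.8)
The plan is to reduce everything to the elementary fact that the fixed profile $\rho$ has a uniformly bounded derivative whose bound is a universal numerical constant, and that the normalization $z \mapsto \frac{2z}{M+1}$ can only contract distances because $M \ge 0$. Thus the dependence on $g$ (through $\widehat{W}_V$, and hence through $M$) enters only in a way that improves the constant, never worsens it.

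\textbf{Step 1: $\rho$ is globally Lipschitz with a $g$-free constant.} I would first observe that $\rho$ is identically $0$ on $(-\infty,0]$ and equal to $1-e^{-z^2}$ on $[0,\infty)$, that the two pieces agree at $z=0$, and that both one-sided derivatives vanish there, so $\rho \in C^1(\mathbb{R})$ with $\rho'(z)=2ze^{-z^2}$ for $z\ge 0$ and $\rho'(z)=0$ for $z<0$. Maximizing $|\rho'(z)|=2ze^{-z^2}$ over $z\ge 0$ by setting $\frac{d}{dz}\big(2ze^{-z^2}\big)=2e^{-z^2}(1-2z^2)$ equal to zero gives the unique critical point $z=\tfrac{1}{\sqrt 2}$, hence $\sup_{z\in\mathbb{R}}|\rho'(z)|=\sqrt{2/e}$; write $L_\rho=\sqrt{2/e}$. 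By the mean value theorem, $|\rho(a)-\rho(b)|\le L_\rho\,|a-b|$ for all $a,b\in\mathbb{R}$. (If one prefers to avoid the regularity check at $0$, the same conclusion follows by gluing: $\rho$ is $L_\rho$-Lipschitz on each of $(-\infty,0]$ and $[0,\infty)$ and the branches match at $0$, so it is $L_\rho$-Lipschitz on all of $\mathbb{R}$.)

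\textbf{Step 2: transfer the bound through the rescaling.} For any $z_1,z_2\in\mathbb{R}$,
\[
|\rho_*(z_1)-\rho_*(z_2)|
=\left|\rho\!\left(\tfrac{2z_1}{M+1}\right)-\rho\!\left(\tfrac{2z_2}{M+1}\right)\right|
\le L_\rho\cdot\frac{2}{M+1}\,|z_1-z_2|.
\]
Since $M\ge 0$ (part of the definition of $M$), we have $\frac{2}{M+1}\le 2$, so $\rho_*$ is Lipschitz with constant $\frac{2L_\rho}{M+1}\le 2L_\rho=2\sqrt{2/e}$. The number $2\sqrt{2/e}$ makes no reference to $g$, to $\widehat{W}_V$, or to $M$; exhibiting this single constant that works simultaneously for every admissible influence vector is precisely the assertion, and the residual $M$-dependence of $\rho_*$ itself is harmless because it only shrinks the constant.

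There is no genuinely hard step here. The only places needing a little care are the regularity of $\rho$ at the junction $z=0$ (settled by the $C^1$ computation or the gluing remark above) and phrasing the conclusion with the uniform bound $2\sqrt{2/e}$ rather than the sharper but $g$-dependent constant $\frac{2L_\rho}{M+1}$, so that the statement ``independent of the influence vector $g$'' is literally true as written.
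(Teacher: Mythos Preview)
Your argument is correct and follows essentially the same route as the paper: bound $|\rho'|$ by a fixed numerical constant, then use the chain rule together with $\frac{2}{M+1}\le 2$ (from $M\ge 0$) to conclude that $\rho_*$ is Lipschitz with constant $2k$ independent of $g$. The paper phrases the second step as a derivative bound $|\rho_*'(z)|=\frac{2}{M+1}|\rho'(\tfrac{2z}{M+1})|\le 2k$ rather than your direct Lipschitz estimate, and it does not compute the explicit value $\sqrt{2/e}$ or discuss regularity at $z=0$, but the substance is the same.
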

\begin{proof}
We know that $\rho$ is Lipschitz continuous owing to the boundness of its derivative $\rho'$. There exists a constant $k$, independent of $\widehat{W}_V$, such that we have $\lvert \rho'(z) \rvert \leq k$ for any $z$. 
We can infer that $\lvert \rho_*'(z)  \rvert = \frac{2}{M+1} \lvert \rho'(\frac{2z}{M+1})\rvert \leq 2k$.
\end{proof}

In the stability theorem proved in Section~\ref{sec:Stability theorem for influence vectors}, 
the independence of the Lipschitz constant of the activation function from the influence vectors $g$ becomes crucial.
Proposition~\ref{prop:auto_activation} means that using the automatic activation function $\rho_*$ is permissible.

\subsection{Example: addressing information loss in PH calculation}\label{example:improve_infoloss}
Consider the time series $T$, as shown in Figure~\ref{fig:Time_temperature_nofeat}. 
We define the zeroth feature set \(F_0\) as \(\{ L, H \} \), representing humidity levels including low (L) and high (H), and the first feature set \(F_1\) as \(\{ |T(t+1) - T(t)| \; | \; t \in \mathbb{T} \} \), representing the magnitude of the temperature changes. 
As shown in Figure~\ref{fig:Time_temperature_feat}, $T$ is complemented by a tuple of features \( (f_1,f_2) \in F_0 \times F_1 \), providing additional information.
This is denoted as \(\widehat{T}\).

To emphasize the sudden changes in temperature, define the influence vector \(g\) such that \(g(4) = 5\) for $4 \in F_1$ and \(g(x) = 0\) for all other features of \(x\). 
In other words, \(\vec{g}_0\) and \(\vec{g}_1\) are expressed as

\noindent
\hfill
\begin{tikzpicture}[scale=2]
    \matrix [matrix of math nodes,style={text depth=1ex,text height=1ex,text width=1em},left delimiter=(,right delimiter=)^T] (g0) at (0,0)
    {
    0 \pgfmatrixnextcell 0 \pgfmatrixnextcell 0 \\
    };
      
    \node[left=1em of g0-1-1] (row1) {$\vec{g_0} = $};
    \node[above=0.5em of g0-1-1] (row1) {$\emptyset^0$};
    \node[above=0.5em of g0-1-2] (row2) {$H$};
    \node[above=0.5em of g0-1-3] (row2) {$L$};

    \matrix [matrix of math nodes,style={text depth=1ex,text height=1ex,text width=1em},left delimiter=(,right delimiter=)^T] (g1) at (2.5,0)
    {
    0 \pgfmatrixnextcell 0 \pgfmatrixnextcell 5 \\
    };
      
    \node[left=1em of g1-1-1] (row1) {$\vec{g_1} = $};
    \node[above=0.5em of g1-1-1] (row1) {$\emptyset^1$};
    \node[above=0.5em of g1-1-2] (row2) {$1$};
    \node[above=0.5em of g1-1-3] (row1) {$4$};

\node at ($(g0)!0.41!(g1)$) (and) { and };
\end{tikzpicture}.
\hfill
\hspace*{0pt}

\begin{figure}[ht]
    \centering
        \includegraphics[width=\linewidth]{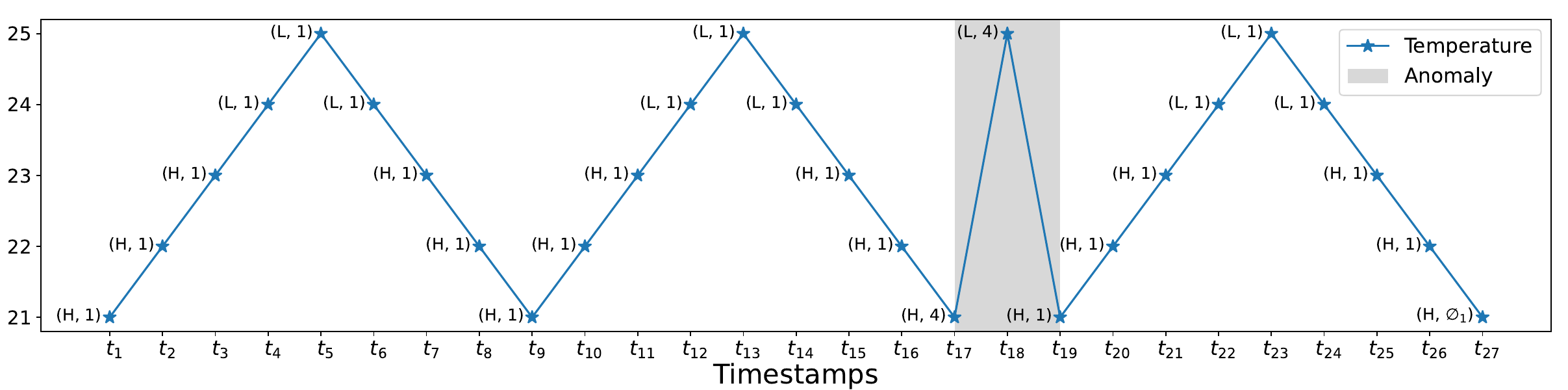}
        \caption{A time series $\widehat{T}$ with features  added from the original time series $T$ shown in Figure~\ref{fig:Time_temperature_nofeat}.}
        \label{fig:Time_temperature_feat}
\end{figure}

For the featured time series \((\widehat{T}, g)\), the vertex weight function  \(\widehat{W}_V\) is zero, as \(\vec{g_0} = \vec{0}\).
The edge weight function \(\widehat{W}_E\) can be obtained using the first count matrix \(C_1\) as follows:

\hspace*{0pt}
\hfill
\begin{tikzpicture}[scale=1.5]
 \matrix [matrix of math nodes,style={text depth=1ex,text height=1ex,text width=1em},left delimiter=(,right delimiter=)] (C1)
 {
  0 \pgfmatrixnextcell 6 \pgfmatrixnextcell 0 \pgfmatrixnextcell \\
  0 \pgfmatrixnextcell 6 \pgfmatrixnextcell 0 \pgfmatrixnextcell \\
  0 \pgfmatrixnextcell 6 \pgfmatrixnextcell 0 \pgfmatrixnextcell \\
  0 \pgfmatrixnextcell 6 \pgfmatrixnextcell 0 \pgfmatrixnextcell \\
  0 \pgfmatrixnextcell 0 \pgfmatrixnextcell 2 \pgfmatrixnextcell \\
 };
  
\node[left=1em of C1-1-1] (row1) {$(21,22)$};
\node[left=1em of C1-2-1] (row2) {$(22,23)$};
\node[left=1em of C1-3-1] (row1) {$(23,24)$};
\node[left=1em of C1-4-1] (row2) {$(24,25)$};
\node[left=1em of C1-5-1] (row2) {$(21,25)$};

\node[above=0.5em of C1-1-1] (row1) {$\emptyset^1$};
\node[above=0.5em of C1-1-2] (row2) {$1$};
\node[above=0.5em of C1-1-3] (row1) {$4$};
  \matrix [matrix of math nodes,style={text depth=1ex,text height=1ex,text width=1em},left delimiter=(,right delimiter=),label=above:\( \vec{g_1}+\vec{1} \)] (v) at (2,0)
  {
    1 \\
    1 \\
    6 \\
  };
  \matrix [matrix of math nodes,style={text depth=1ex,text height=1ex,text width=1em},left delimiter=(,right delimiter=),label=above:\( C_1 \cdot (\vec{g_1}+\vec{1}) \)] (Cv) at (4,0)
  {
    6 \\
    6 \\
    6 \\
    6 \\
    12 \\
  };

\node at ($(C1)!0.6!(v)$) (times) {$\times$};
\node at ($(v)!0.5!(Cv)$) (equal) {$=$};
\end{tikzpicture}.
\hfill
\hspace*{0pt}

\begin{figure}[ht]
    \centering
    \subfloat[Rips filtration when $\vec{g}_0 = (0,0,0)$ and $\vec{g}_1 = (0,0,5)$. \label{fig:Rips_filtration2}]{
        \includegraphics[width=0.95\textwidth]{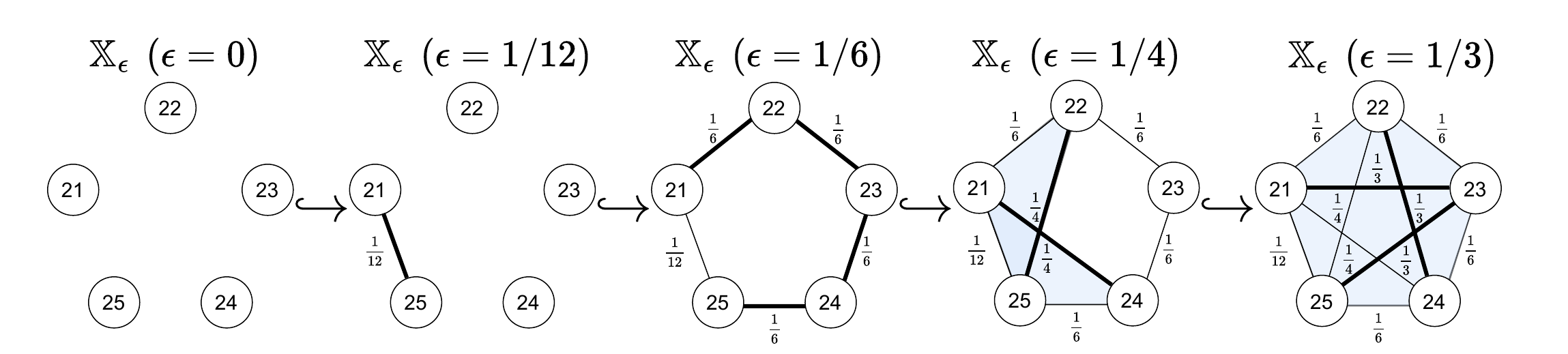}
    }\\ 
    \subfloat[PD when $\vec{g}_0 = (0,0,0)$ and $\vec{g}_1 = (0,0,0)$. \label{fig:PD_[0, 0, 0]_[0, 0, 0]}]{
        \includegraphics[width=0.45\linewidth]{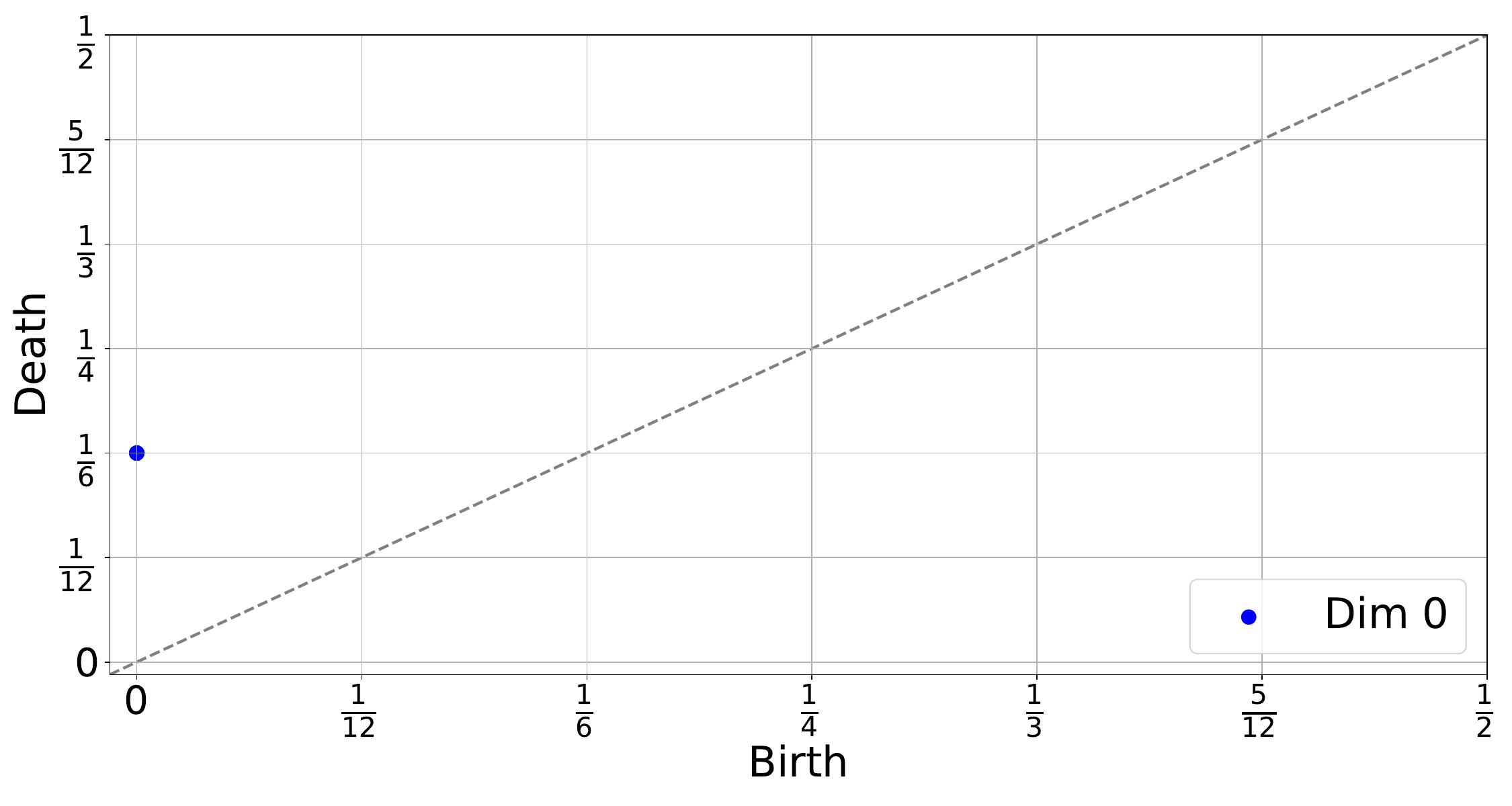}
    }
    \hspace{2mm}
    \subfloat[PD when $\vec{g}_0 = (0,0,0)$ and $\vec{g}_1 = (0,0,5)$. \label{fig:PD_[0, 0, 0]_[0, 0, 5]}]{
        \includegraphics[width=0.45\linewidth]{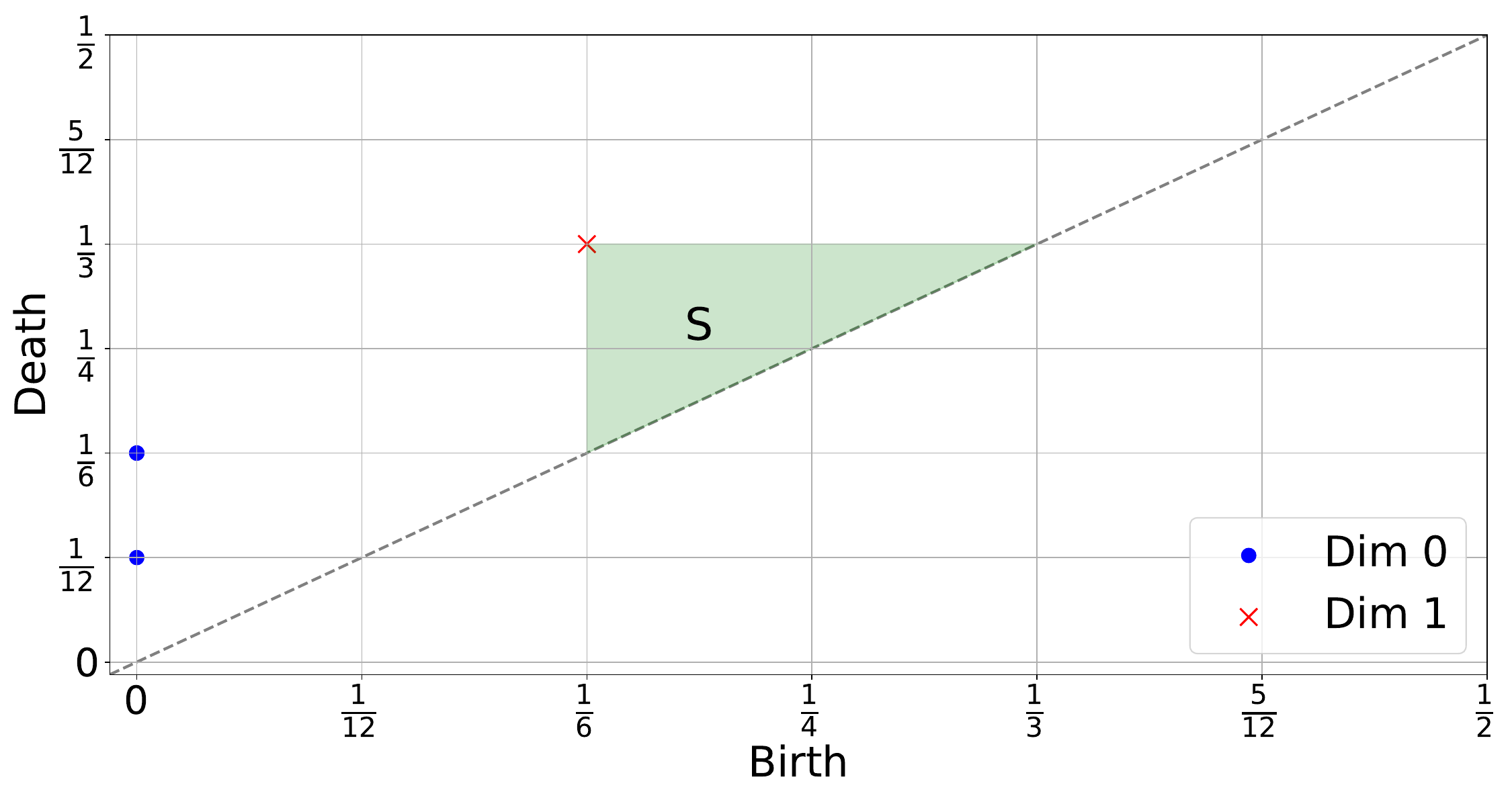}
    }\\
    \subfloat[All points of the PDs for $\vec{g}_0 = (0, i, j)$ with $i, j \in \mathcal{I} = \left\{ \frac{n}{100} \mid 0 \leq n \leq 500, n \in \mathbb{N} \right\}$ when $\vec{g}_1 = (0,0,0)$. \label{fig:PD_vary_[0,0,0]}]{
        \includegraphics[width=0.45\linewidth]{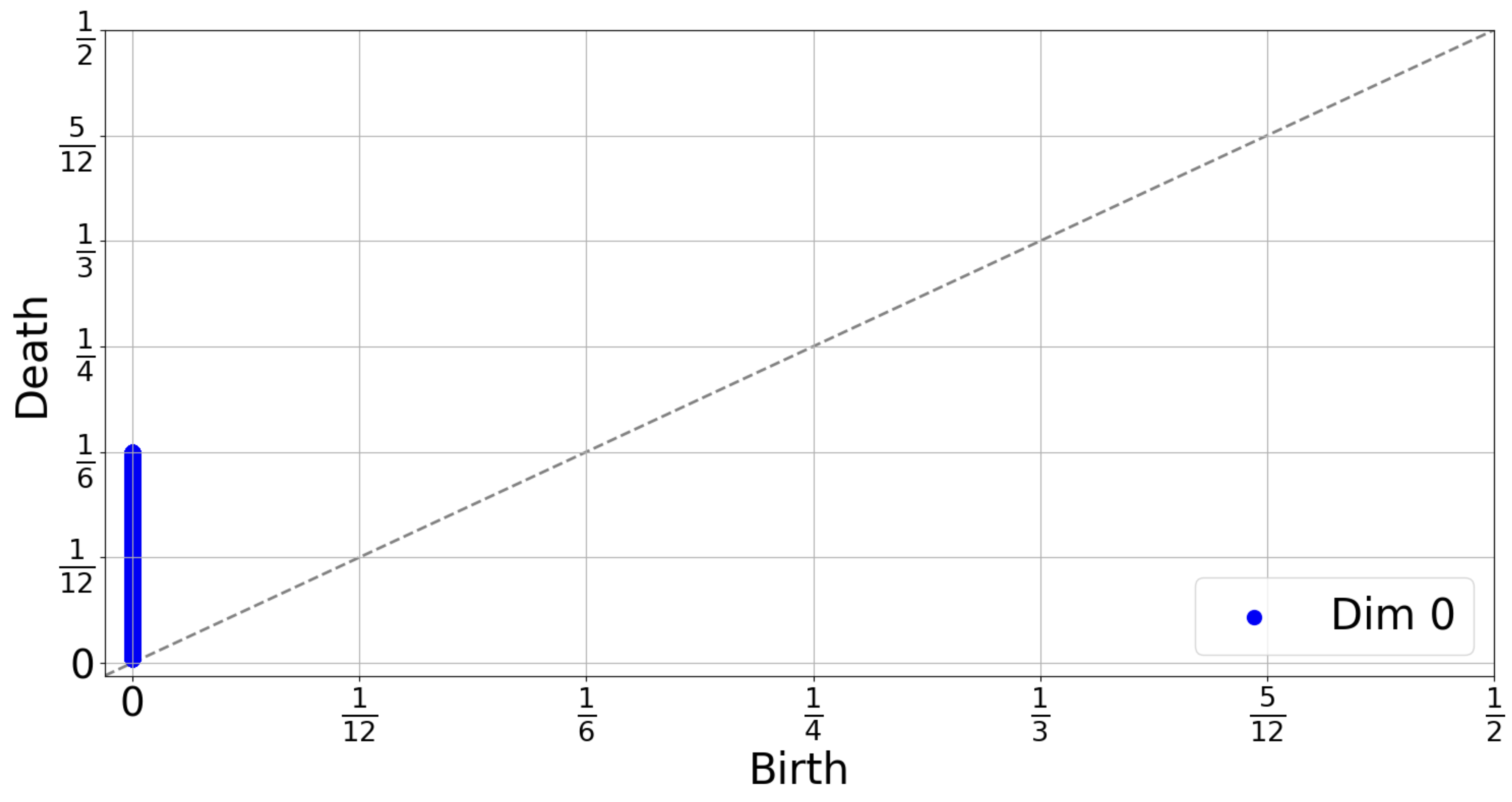}
    }
    \hspace{2mm}
    \subfloat[All points of the PDs for $\vec{g}_0 = (0, i, j)$ with $i, j \in \mathcal{I} = \left\{ \frac{n}{100} \mid 0 \leq n \leq 500, n \mathbb{N} \right\}$ when $\vec{g}_1 = (0,0,5)$. \label{fig:PD_vary_[0,0,5]}]{
        \includegraphics[width=0.45\linewidth]{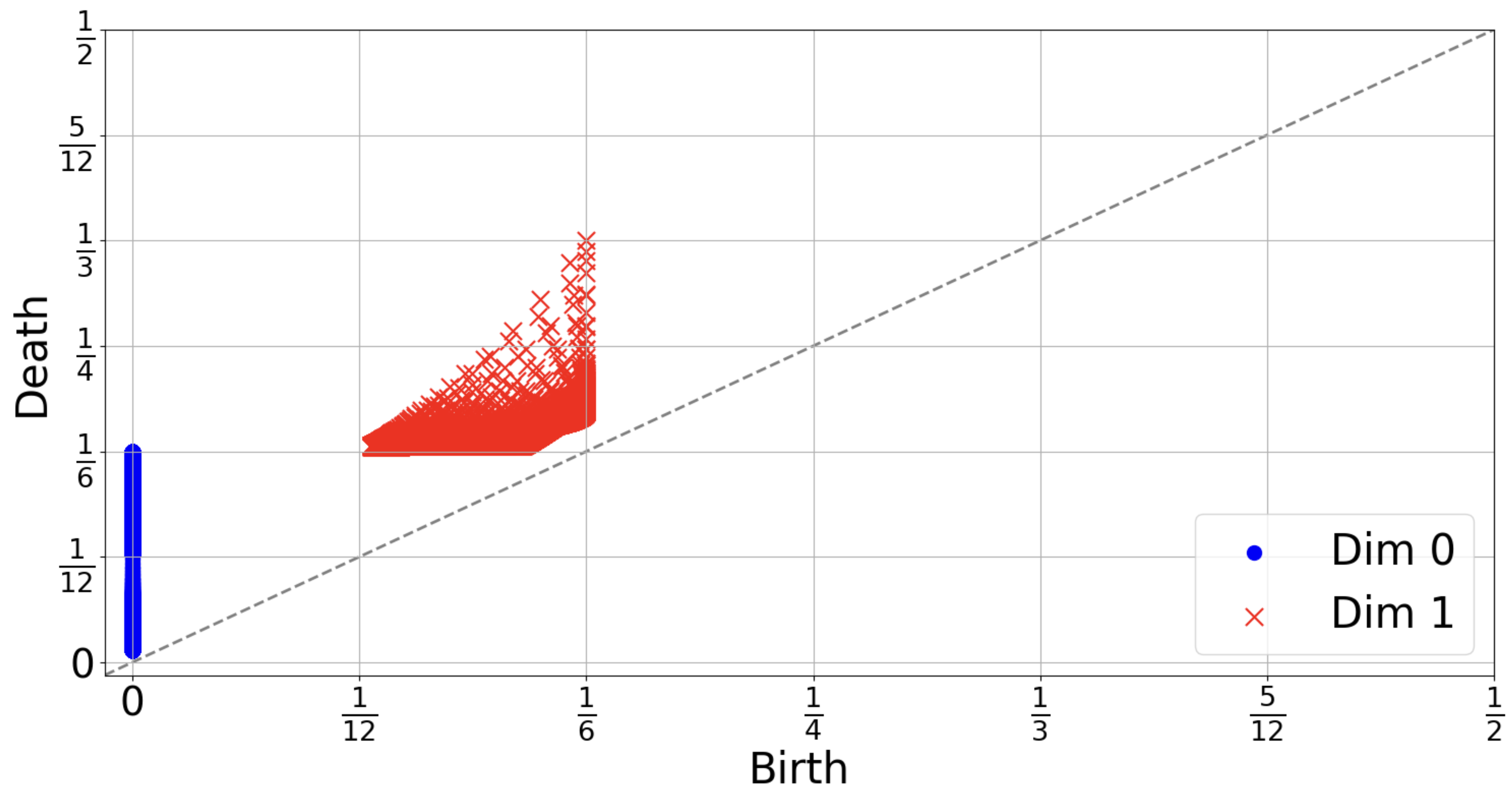}
    }
    \caption{Rips filtration and persistence diagrams (PDs) for various influence vectors for $\widehat{G}^g$. }
    \label{fig:PH_diagram2}
\end{figure}

Figure.~\ref{fig:Rips_filtration2} illustrates the Rips filtration for the metric space \((V, \widehat{d})\), which is constructed from the graph \(\widehat{G}^g = (V, E, \widehat{W}_V, \widehat{W}_E)\). In the filtration, the edge \(\{21,25\}\) first appears in \(\mathbb{X}_{\epsilon}\) when \(\epsilon = d(21,25) = \frac{1}{12}\). 
Unlike the example discussed in Section~\ref{subsec:Discussion of Information loss through an example}, by assigning an influence value of \(5\) to \(4 \in F_1\), we emphasize the importance of the edge $\{21,25\}$. 
This not only results in the formation of a point in the \(1\)-dimensional PD, as observed in Figure~\ref{fig:Rips_filtration2}, but also leads to the emergence of a point in the \(0\)-dimensional PD with a death time of \(d(21,25) = \frac{1}{2}\). 
These developments are illustrated in Figures~\ref{fig:Rips_filtration2} and ~\ref{fig:PD_[0, 0, 0]_[0, 0, 5]}, which contrast with the scenario in Figures~\ref{fig:Rips_complex} and ~\ref{fig:PD_[0, 0, 0]_[0, 0, 0]}, where the latter displays a single point representing four \(0\)-dimensional points.
This example illustrates that by adjusting the influence vector $g$, previously unrepresented domain-specific information can now be effectively preserved in the computation of PH.

In this example, the humidity was used as the zeroth feature. 
Figures~\ref{fig:PD_vary_[0,0,0]} and ~\ref{fig:PD_vary_[0,0,5]} show that the influence on the zeroth features affects the persistence (death time \(-\) birth time) of each point rather than the existence of \(1\)-dimensional points. 
Although not extensively covered in this example, the zeroth feature is considered significant in the stock data, as discussed in Section~\ref{subsec:The first method}.
Furthermore, in Section~\ref{subsec:The first method}, the area of the shaded region \(S\) in Figure~\ref{fig:PD_[0, 0, 0]_[0, 0, 5]} is used for anomaly detection on the stock data.

\subsection{Feature selection}\label{subsec:Feature selection}
Feature selection is problem-dependent, and features should be properly categorized for analysis. 
Suppose that we have a time series \( T \) and a collection \( \mathcal{O} \) of objects, including the various domain knowledge that we want to consider as features.
To classify \( \mathcal{O} \) into the zeroth and first features, it is important to understand that the $i$th features of the featured time series data determine the weights of \(i\)-simplices in graphs.
The vertex set of graphs consists of single observations $\{T(t)\}$, making it sensible to associate any feature \( s_0 \in \mathcal{O} \) closely related to a single observation with the zeroth feature.
In addition, because the first features influence the weights of edges \( \{T(t), T(t+1)\} \) for some $t \in \mathbb{T}$, if any feature \( s_1 \in \mathcal{O} \) relates to any two observations simultaneously, it would be proper to regard it as the first feature. 

In Example~\ref{example:improve_infoloss}, humidity is considered the zeroth feature because it pertains to the information from a single observation \( T(t) \) at each timestamp \( t \in \mathbb{T} \).
Furthermore, the magnitude of the temperature change is classified as the first feature because it is derived from two consecutive observations \( T(t) \) and \( T(t+1) \) at each timestamp \( t \in \mathbb{T} \).
\section{Stability theorem for influence vectors}\label{sec:Stability theorem for influence vectors}

The stability theorem for persistence diagrams is crucial for ensuring reliable topological inferences.

\begin{lemma}[Cohen-Steiner et al.~\cite{cohen2005stability}]\label{lemma:Stability theorem for PH}
Let $\mathbb{X}$ be a finite abstract simplicial complex.
For any two functions $h_1, h_2 : \mathbb{X} \rightarrow \mathbb{R}$, 
the persistence diagrams $\text{dgm}_p(h_1)$ and $\text{dgm}_p(h_2)$ satisfy
\[
D_B(\text{dgm}_p(h_1), \text{dgm}_p(h_2)) \leq ||h_1 - h_2||_{\infty} 
\]
for any dimension $p$, where $D_B$ is the bottleneck distance.
\end{lemma}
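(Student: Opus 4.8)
The plan is to deduce this from the algebraic stability theorem for persistence modules by writing down an explicit interleaving between the sublevel-set filtrations of $h_1$ and $h_2$. Set $\epsilon = \|h_1 - h_2\|_{\infty}$ and record the pointwise bounds $h_2(\sigma) \le h_1(\sigma) + \epsilon$ and $h_1(\sigma) \le h_2(\sigma) + \epsilon$ for every simplex $\sigma \in \mathbb{X}$. Writing $\mathbb{X}^{1}_{a} = h_1^{-1}((-\infty,a])$ and $\mathbb{X}^{2}_{a} = h_2^{-1}((-\infty,a])$, each is a subcomplex of $\mathbb{X}$ provided $h_i$ is monotone along faces — the one standing hypothesis I would make explicit, and one that holds automatically for the Rips function $h$ of Section~\ref{subsec:Persistent homology of metric spaces}, and more generally for $\sigma \mapsto \max_{\tau \subseteq \sigma} h_i(\tau)$, which has the same or smaller sup-distance. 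The bounds then yield, for every $a \in \mathbb{R}$, the inclusions
\[
\mathbb{X}^{1}_{a} \subseteq \mathbb{X}^{2}_{a+\epsilon} \subseteq \mathbb{X}^{1}_{a+2\epsilon}, \qquad \mathbb{X}^{2}_{a} \subseteq \mathbb{X}^{1}_{a+\epsilon} \subseteq \mathbb{X}^{2}_{a+2\epsilon},
\]
and all triangles and squares built from these together with the internal inclusions $\mathbb{X}^{i}_{a} \hookrightarrow \mathbb{X}^{i}_{b}$ for $a \le b$ commute strictly, since every arrow is an inclusion of subcomplexes.

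Next I would apply the homology functor $H_p(-;\mathbb{F})$ with field coefficients to obtain persistence modules $\mathbb{V}^{i} = \{ H_p(\mathbb{X}^{i}_{a}) \}_{a \in \mathbb{R}}$ with structure maps induced by inclusion. Because $\mathbb{X}$ is finite, each $h_i$ attains only finitely many values, so $\mathbb{V}^{i}$ is pointwise finite-dimensional with finitely many distinct slices; hence it decomposes into finitely many interval summands and its persistence diagram $\text{dgm}_p(h_i)$ is well defined. Functoriality of $H_p$ converts the commuting inclusion triangles above into the pair of commuting triangles that constitute an $\epsilon$-interleaving between $\mathbb{V}^{1}$ and $\mathbb{V}^{2}$. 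Invoking the algebraic stability (``soft isometry'') theorem — an $\epsilon$-interleaving of pointwise finite-dimensional persistence modules induces an $\epsilon$-matching of their persistence diagrams — gives $D_B(\text{dgm}_p(h_1), \text{dgm}_p(h_2)) \le \epsilon = \|h_1 - h_2\|_{\infty}$, as claimed.

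The interleaving construction is pure bookkeeping; the substantive content is the interleaving-to-matching step, i.e. the algebraic stability theorem, which I would either cite or, for a self-contained argument in this finite/tame setting, prove along the original lines of Cohen-Steiner et al.: interpolate $h_t = (1-t)h_1 + t h_2$, use the Box/Quadrant Lemma to express the multiplicity of $\text{dgm}_p$ in a rectangle as an alternating sum of ranks of inclusion-induced maps $H_p(\mathbb{X}_{x}) \to H_p(\mathbb{X}_{y})$, show these rank functions behave compatibly along the interpolation, and then extract the matching by a Hall-type bijection argument. That rank-stability-plus-matching step is the technical heart and where I expect to spend the most care; the only other point requiring attention is the monotonicity reduction noted above, which for the filtration functions used in this paper is trivial.
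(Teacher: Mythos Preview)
The paper does not give its own proof of this lemma: it is stated with attribution to Cohen-Steiner et al.\ and used as a black box in the proof of Theorem~\ref{Main_Thm}. There is therefore nothing in the paper to compare your argument against.

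That said, your sketch is a correct outline of the standard modern proof: the sublevel-set inclusions $\mathbb{X}^1_a \subseteq \mathbb{X}^2_{a+\epsilon}$ give an $\epsilon$-interleaving at the level of persistence modules, and algebraic stability converts this to a bound on the bottleneck distance. Your caveat about monotonicity is well placed --- the lemma as stated in the paper omits the hypothesis that $h_i$ be a filtration function (monotone along faces), which is indeed needed for the sublevel sets to be subcomplexes, and your reduction via $\sigma \mapsto \max_{\tau \subseteq \sigma} h_i(\tau)$ handles the general case cleanly. The interpolation-plus-Box-Lemma route you mention at the end is closer to the original 2005 argument, while the interleaving route is the cleaner modern formulation; either would suffice here, and since the paper only cites the result, there is no need to reproduce the technical heart.
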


If the change in the function $h: \mathbb{X} \rightarrow \mathbb{R}$ is small, then the change in the PD is also tiny.
We prove that the PD remains stable when adjusting the influence vector $g$ for the featured time series $(\widehat{T},g)$.
This section proves that the PD remains stable when adjusting the influence vector $g$ for the featured time series $(\widehat{T},g)$.
\begin{theorem}[Stability of the proposed method]\label{Main_Thm}
Let \(\widehat{T}\) be a time series with augmented feature information derived from a times series \(T\).
If the activation function $\rho$ is Lipschitz continuous, then there exists a constant $C=C(\widehat{T},\rho) > 0$ such that for any two influence vectors \(g\) and \(g'\), the persistence diagrams $\text{dgm}_p(g)$ and $\text{dgm}_p(g')$ satisfy
\[
 D_B(\text{dgm}_p(g), \text{dgm}_p(g')) \leq C \lVert g - g' \rVert_{\infty}
\]
for any dimension $p$, where $D_B$ is the bottleneck distance. 
\end{theorem}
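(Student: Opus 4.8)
The plan is to reduce the whole statement to the Cohen--Steiner stability lemma (Lemma~\ref{lemma:Stability theorem for PH}), applied to the two Rips filtration functions $h_g, h_{g'} : \mathbb{X} \to \mathbb{R}$ attached to the metric spaces $(V,\widehat{d}_g)$ and $(V,\widehat{d}_{g'})$ (well-defined by Proposition~\ref{prop: d_ is metric}). The point that makes the lemma directly applicable is that the abstract simplicial complex $\mathbb{X}=\mathcal{P}(V)$ is determined by $T$ alone and does not change with $g$, so $D_B(\text{dgm}_p(g),\text{dgm}_p(g'))\le \lVert h_g - h_{g'}\rVert_\infty$. Hence it suffices to find $C=C(\widehat{T},\rho)$ with $\lVert h_g - h_{g'}\rVert_\infty \le C\lVert g-g'\rVert_\infty$. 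Since $h_g$ and $h_{g'}$ vanish on $0$-simplices and, on a simplex $\sigma$ of dimension $\ge 1$, equal the maximum of $\widehat{d}_g$ (resp.\ $\widehat{d}_{g'}$) over the $1$-faces $\tau\subseteq\sigma$, the elementary bound $|\max_i a_i-\max_i b_i|\le\max_i|a_i-b_i|$ reduces the task to controlling $\max_{v,w\in V}|\widehat{d}_g(v,w)-\widehat{d}_{g'}(v,w)|$.

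Next I would pass from distances to length functions. Fixing $v,w$ and letting $p^\ast$ realize the minimum defining $\widehat{d}_{g'}(v,w)$, non-negativity of $L^{g'}$ (noted in the proof of Proposition~\ref{prop: d_ is metric}) lets me take $p^\ast$ simple, so it uses at most $|V|-1$ edges; then $\widehat{d}_g(v,w)\le\sum_{e\in p^\ast}L^g(e)=\widehat{d}_{g'}(v,w)+\sum_{e\in p^\ast}\bigl(L^g(e)-L^{g'}(e)\bigr)\le\widehat{d}_{g'}(v,w)+(|V|-1)\lVert L^g-L^{g'}\rVert_\infty$, and symmetrically. This gives
\[
\max_{v,w\in V}\bigl|\widehat{d}_g(v,w)-\widehat{d}_{g'}(v,w)\bigr|\le(|V|-1)\,\lVert L^g-L^{g'}\rVert_\infty,
\]
so the theorem comes down to a Lipschitz estimate $\lVert L^g-L^{g'}\rVert_\infty\le C'\lVert g-g'\rVert_\infty$ with $C'$ depending only on $\widehat{T}$ and $\rho$.

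For the length-function estimate I would combine three ingredients. (i) \emph{Affinity in $g$ with bounded coefficients:} $\widehat{W}_V^g(v_i)=(C_0\cdot\vec{g_0})_i$ and $\widehat{W}_E^g(e_i)=(C_1\cdot(\vec{g_1}+\vec{1}))_i$ are affine in $g$ with non-negative integer coefficients from the count matrices, so $|\widehat{W}_V^g(v_i)-\widehat{W}_V^{g'}(v_i)|\le M_0\lVert g-g'\rVert_\infty$ and $|\widehat{W}_E^g(e_i)-\widehat{W}_E^{g'}(e_i)|\le M_1\lVert g-g'\rVert_\infty$, where $M_0=\max_i\sum_{j}c^0_{ij}$ and $M_1=\max_i\sum_{j\ge1}c^1_{ij}$. (ii) \emph{Uniform lower bound on edge weights:} since every edge of $G$ occurs at least once in $\widehat{T}$ and $g\ge0$, one has $\widehat{W}_E^g(e)\ge1$ for all $g$; hence $x\mapsto x^{-1}$ is $1$-Lipschitz on the relevant range, giving $|(\widehat{W}_E^g(e))^{-1}-(\widehat{W}_E^{g'}(e))^{-1}|\le M_1\lVert g-g'\rVert_\infty$, and moreover $\alpha^g=\min_e(\widehat{W}_E^g(e))^{-1}\in(0,1]$ with $|\alpha^g-\alpha^{g'}|\le M_1\lVert g-g'\rVert_\infty$ (a minimum of finitely many $1$-Lipschitz quantities). (iii) \emph{Lipschitz $\rho$:} by hypothesis (and Proposition~\ref{prop:auto_activation} for the automatic choice) $\rho$ has a Lipschitz constant $k$ independent of $g$, and $|\rho|<1$. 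Writing $\rho_g(e)=\rho(\widehat{W}_V^g(a)+\widehat{W}_V^g(b))$ for $e=\{a,b\}$ and splitting $\alpha^g\rho_g(e)-\alpha^{g'}\rho_{g'}(e)=\alpha^g\bigl(\rho_g(e)-\rho_{g'}(e)\bigr)+(\alpha^g-\alpha^{g'})\rho_{g'}(e)$, then using $\alpha^g\le1$, $|\rho_{g'}|<1$, and $|\rho_g(e)-\rho_{g'}(e)|\le 2kM_0\lVert g-g'\rVert_\infty$, I obtain $|L^g(e)-L^{g'}(e)|\le(2M_1+2kM_0)\lVert g-g'\rVert_\infty$, uniformly in $e$. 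Chaining the displays yields the claim with $C=(|V|-1)(2M_1+2kM_0)$, which depends only on $\widehat{T}$ (through $|V|$ and the count-matrix row sums $M_0,M_1$) and on $\rho$ (through $k$).

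The main obstacle, and the reason the hypotheses are stated as they are, is making all of this \emph{uniform in} $g$: the bound on $\lVert\widehat{d}_g-\widehat{d}_{g'}\rVert_\infty$ must not degrade as $g$ grows. This is exactly what forces (a) keeping the edge weights bounded away from $0$, so that inverting them and forming $\alpha^g$ are stably Lipschitz operations, and (b) insisting that $\rho$'s Lipschitz constant be independent of $g$ --- precisely the content of Proposition~\ref{prop:auto_activation} for the automatic activation function. Once these two uniformities are in hand, everything else is bookkeeping with the triangle inequality.
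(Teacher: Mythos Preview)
Your argument is correct and in fact cleaner than the paper's. Both proofs start from Lemma~\ref{lemma:Stability theorem for PH} and aim for $\lVert h_g-h_{g'}\rVert_\infty\le C\lVert g-g'\rVert_\infty$, but the paper performs two separate interpolation/intermediate-value-theorem steps: first to pass from the maximizing simplex $\sigma_*$ to a single $1$-face, and then again to match the two minimizing paths defining $\widehat d^{g}(v,w)$ and $\widehat d^{g'}(v,w)$. You bypass both with standard one-line inequalities: $|\max_i a_i-\max_i b_i|\le\max_i|a_i-b_i|$ for the first reduction, and the competing-path bound $\widehat d_g(v,w)\le\sum_{e\in p^{g'}}L^g(e)$ for the second. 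The result is a direct chain $\lVert h_g-h_{g'}\rVert_\infty\le(|V|-1)\lVert L^g-L^{g'}\rVert_\infty$ that avoids the continuity argument for $H(t)$ and $K(t)$ entirely. Your treatment of $|\alpha^g-\alpha^{g'}|$ via ``a minimum of finitely many Lipschitz functions is Lipschitz'' is likewise simpler than the paper's contradiction lemma. The paper's interpolation yields slightly sharper tracking of constants (it works with $C_1^{\min}=\min_e\sum_j c^1_{i_ej}$ rather than your cruder $\widehat W_E^g(e)\ge 1$), but for the existence statement this is immaterial.

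One small slip: in defining $M_1=\max_i\sum_{j\ge1}c^1_{ij}$ you drop the column $j=0$, but $g(\emptyset^1)$ is a genuine coordinate of the influence vector and enters $\widehat W_E^g$ through $c^1_{i0}(g(\emptyset^1)+1)$. The bound $|\widehat W_E^g(e_i)-\widehat W_E^{g'}(e_i)|\le M_1\lVert g-g'\rVert_\infty$ therefore needs $M_1=\max_i\sum_{j\ge 0}c^1_{ij}$, matching your own choice for $M_0$. With that correction everything goes through.
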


The function $\Phi: g \mapsto \text{dgm}_p(g)$ is Lipschitz continuous.
For the count matrices  \( C_0 = (c^0_{ij}) \) and  \( C_1 = (c^1_{ij}) \), 
define \( C^{\text{max}}_0 \) and \( C^{\text{min}}_1 \), respectively, as follows:
\[
 C^{\text{max}}_0 := \max_{v \in V} \sum_{j} c^0_{i_v j} \;\;\text{ and }\;\; C^{\text{min}}_1 := \min_{e \in E} \sum_{j} c^1_{i_e j}.
\]
By the Lipschitz continuity of the  activation function \( \rho \), 
we can find a constant \( k \) such that 
\[
\left| \rho(x) - \rho(y) \right| \leq k \left| x - y \right| \text{ for any } x,y.
\]
The constant \( C \) of the main theorem is 
\[
C = \left( k C^{\text{max}}_0 + 1 \right) \frac{2}{C^{\text{min}}_1} (\lvert V \rvert-1).
\]

\subsection{Proof of the stability theorem for influence vectors}
Consider any featured time series $(\widehat{T},g)$.
We append $g$ to all notations if they are variables dependent on $g$. 
Consider the connected graph $\widehat{G}^g = (V, E, \widehat{W}^g_V, \widehat{W}^g_E)$ and the metric space \((V, \widehat{d}^{g})\).
Recall that in Section \ref{subsec:Persistent homology of metric spaces}, we defined the filtration value function $h_g: \mathbb{X} \rightarrow \mathbb{R}$, where $\mathbb{X} = \mathcal{P}(V)$. By Lemma \ref{lemma:Stability theorem for PH}, we have $D_B \left( \text{dgm}_p(g), \text{dgm}_p(g') \right) \leq \lVert h_g - h_{g'} \rVert_{\infty}$ for any two influence vectors $g$ and $g'$. 
Therefore, it suffices to show that there exists a constant $C$ independent of $g$ and $g'$, satisfying $\lVert h_g - h_{g'} \rVert_{\infty} \leq C \lVert g - g' \rVert_{\infty}$.

Since $\mathbb{X}$ is finite, we have $\lVert h_{g} - h_{g'} \rVert_{\infty} = \lvert (h_{g} - h_{g'})(\sigma_*) \rvert$ for a $p$-simplex $\sigma_* \in \mathbb{X}$. By the maximality of $h_{g}$ and $h_{g'}$, $h_{g}(\sigma_*) = \widehat{d}^{g}(v_{g},w_{g})$ and $h_{g'}(\sigma_*) = \widehat{d}^{g'}(v_{g'},w_{g'})$ for edges $(v_{g},w_{g}), (v_{g'},w_{g'}) \subseteq \sigma_*$. Consider an interpolation map $q(t) = (1-t)g + tg'$ for a real number $t$, where $q(0)=g$ and $q(1)=g'$. 
Define a function $H(t) = \widehat{d}^{q(t)}(v_{g},w_{g}) - \widehat{d}^{q(t)}(v_{g'},w_{g'})$, where $\widehat{d}^{q(t)}$ is the distance in the graph $\widehat{G}^{q(t)}$, as defined in Definition~\ref{def:distance_weightedgraph}. 
By the maximality of the functions $h_g$ and $h_{g'}$, we have $H(0) \ge 0$ and $H(1) \le 0$. 
Because $H$ is continuous on the closed interval $[0,1]$, by the intermediate value theorem, there exists a real number $t_0 \in [0,1]$ such that $H(t_0)=0$.
If we denote $q(t_0)$ as $g^*$, we have 
\begin{align*}
\lvert (h_{g} - h_{g'})(\sigma_*) \rvert &= \lvert \widehat{d}^{g}(v_{g},w_{g}) - \widehat{d}^{g'}(v_{g'},w_{g'}) \rvert \\
&\leq \lvert \widehat{d}^{g}(v_{g},w_{g}) - \widehat{d}^{g^*}(v_{g},w_{g}) \rvert + \lvert \widehat{d}^{g^*}(v_{g'},w_{g'}) - \widehat{d}^{g'}(v_{g'},w_{g'}) \rvert \\
&= \lvert h_{g} - h_{g^*} \rvert((v_{g},w_{g})) + \lvert h_{g^*} - h_{g'} \rvert((v_{g'},w_{g'})).
\end{align*}
If we can prove there exists a constant $C$ such that $\lvert h_{g_1} - h_{g_2} \rvert(e) \leq C \lVert g_1 - g_2 \rVert_{\infty}$ for any influence vectors $g_1, g_2$, and edge $e \in E$, then we have
\begin{align*}
\lvert h_{g} - h_{g^*} \rvert((v_{g},w_{g})) + \lvert h_{g^*} - h_{g'} \rvert((v_{g'},w_{g'}))
&\leq C \lVert g - g^* \rVert_{\infty} + C \lVert g' - g^* \rVert_{\infty} \\
&= C t_0 \lVert g - g' \rVert_{\infty}  + C(1- t_0) \lVert g - g' \rVert_{\infty} \\
&= C \lVert g - g' \rVert_{\infty},
\end{align*}
which completes the proof. Therefore, it suffices to show that for any influence vectors $g$ and $g'$, and edge $e \in E$, there exists a constant $C$ such that
\(
\lvert h_{g} - h_{g'} \rvert(e) \leq C \lVert g - g' \rVert_{\infty}.
\)

Take any influence vectors \(g\), \(g'\), and edge \(e_*=(v,w) \in E\).
Consider an interpolation map \(q(t) = (1-t)g + tg'\) for the real number \(t\) such that \(q(0)=g\) and \(q(1)=g'\).
For any influence vector $q(t)$, we have 
\(
\widehat{d}^{q(t)}(v,w) = \sum_{e \in p^{q(t)}} L^{q(t)}(e)
\)
for some path \(p^{q(t)}\) from \(v\) to \(w\), as described in Definition~\ref{def:distance_weightedgraph}.
Define a function 
\(
K(t)=\sum_{e \in p^g}L^{q(t)}(e) - \sum_{e \in p^{g'}}L^{q(t)}(e)
\).
By the minimality of paths \(p^g\) and \(p^{g'}\), we deduce \(K(0) \leq 0\) and \(K(1) \geq 0\). 
By the intermediate value theorem, there exists \(t_1\in [0,1]\) such that \(K(t_1) = 0\).
If we denote $q(t_1)$ as $g^*$, then 
\begin{align}
\lvert (h_g - h_{g'})(e_*) \rvert   = & \Big\lvert \widehat{d}^{g}(v,w) - \widehat{d}^{g'}(v,w) \Big\rvert = \Big\lvert \sum_{e \in p^g}L^g(e) - \sum_{e \in p^{g'}}L^{g'}(e) \Big\rvert \nonumber\\
 = & \Big\lvert \sum_{e \in p^g}L^g(e) -\sum_{e \in p^g}L^{g*}(e) + \sum_{e \in p^{g'}}L^{g*}(e) - \sum_{e \in p^{g'}}L^{g'}(e) \Big\rvert \nonumber\\
 \leq & \sum_{e \in p^g}\lvert L^g(e) - L^{g*}(e) \rvert + \sum_{e \in p^{g'}}\lvert L^{g*}(e) - L^{g'}(e) \rvert. \label{eq:total}
\end{align}
Consider the first summation in inequality (\ref{eq:total}) as follows:
\begin{align*}
&  \sum_{e \in p^g}\left\lvert L^g(e) - L^{g*}(e) \right\rvert  \\
& = \sum_{e \in p^g}\Big\lvert (\widehat{W}^g_E(e))^{-1} -(\widehat{W}^{g^*}_E(e))^{-1} - \alpha^g( \rho(\widehat{W}^g_V(a) + \widehat{W}^g_V(b)) + \alpha^{g^*}( \rho(\widehat{W}^{g^*}_V(a) + \widehat{W}^{g^*}_V(b)) \Big\rvert \\
& \leq \sum_{e \in p^g}\Big\lvert (\widehat{W}^g_E(e))^{-1} -(\widehat{W}^{g^*}_E(e))^{-1} \Big\rvert + \Big\lvert \alpha^g( \rho(\widehat{W}^g_V(a) + \widehat{W}^g_V(b)) - \alpha^{g^*}( \rho(\widehat{W}^{g^*}_V(a) + \widehat{W}^{g^*}_V(b)) \Big\rvert.
\end{align*}
We can express $(\widehat{W}^g_E(e))^{-1} = 1 / \left( \sum_j (g_1(s_j)+1)c^1_{i_e j} \right)$, where $i_e$ designates the position of $e$ in the rows of the first count matrix \(C_1 = (c^1_{ij})\). Observe that
\begin{align}
\sum_{e \in p^g} \Big\lvert (\widehat{W}^g_E(e))^{-1}-(\widehat{W}^{g^*}_E(e))^{-1} \Big\rvert = &
\sum_{e \in p^g} \left\lvert \frac{1}{\sum_j (g_1(s_j)+1)c^1_{i_e j}} - \frac{1}{\sum_j (g^*_1(s_j)+1)c^1_{i_e j}} \right\rvert \nonumber\\
= & \sum_{e \in p^g} \left\lvert \frac{\sum_j c^1_{i_e j}(g_1(s_j)-g^*_1(s_j))}{(\sum_j (g_1(s_j)+1)c^1_{i_e j})(\sum_j (g^*_1(s_j)+1)c^1_{i_e j})}\right\rvert \nonumber\\
\leq & \sum_{e \in p^g} \left\lvert \frac{\sum_j c^1_{i_e j}(g_1(s_j)-g^*_1(s_j))}{(\sum_j c^1_{i_e j})(\sum_j c^1_{i_e j})}\right\rvert \nonumber\\
\leq & \sum_{e \in p^g} \left\lvert \frac{(\sum_j c^1_{i_e j})}{(\sum_j c^1_{i_e j})(\sum_j c^1_{i_e j})}\right\rvert \lVert g - g^* \rVert_{\infty} \nonumber\\
\leq & t_1 \sum_{e \in p^g} \left(\frac{1}{(\sum_j c^1_{i_e j})}\right) \lVert g - g' \rVert_{\infty} \hspace{3mm} \text{ by }g-g^*=t_1(g-g') \nonumber\\
\leq & t_1 \sum_{e \in p^g} \left(\frac{1}{(\sum_j c^1_{i_e j})}\right) \lVert g - g' \rVert_{\infty} \nonumber\\
\leq &  t_1 \sum_{e \in p^g} \left(\frac{1}{C^{\text{min}}_1}\right) \lVert g - g' \rVert_{\infty} \hspace{3mm} \text{ by } C^{\text{min}}_1 = \min_{e \in E} \sum_j c^1_{i_e j} \nonumber\\
\leq &  t_1 \frac{1}{C^{\text{min}}_1} \lVert g - g' \rVert_{\infty} (\lvert V \rvert-1). \label{eq:1_part}
\end{align}
In the last inequality (\ref{eq:1_part}), we utilize the fact that the number of edges in a path $p$, without repeated vertices, is precisely $\lvert V_p \rvert - 1$, where $\lvert V_p \rvert$ denotes the number of vertices in $p$. 
The path $p^g$ does not contain any repeated vertices owing to its minimality in the definition of the distance $d$.

We demonstrate that $\alpha^g$ is bounded for any given $g$; recall that $\alpha^g = \min_{e \in E} (\widehat{W}^g_E(e))^{-1}$. Given that all elements of an influence vector are non-negative,
\[
\sum_j (g_1(s_j)+1)c^1_{i_ej} \geq \sum_j c^1_{i_ej} \geq \text{min}_{e \in E}\{ \sum_j c^1_{i_ej} \} = C^{\text{min}}_1
\]
holds for any edge $e \in E$. Therefore, we can assert that $\alpha^g \leq \frac{1}{C^{\text{min}}_1}$ for any $g$.

\begin{lemma} Given any pair of influence vectors $g$ and $g'$, it holds true that $\lvert \alpha^g - \alpha^{g'} \rvert \leq \frac{1}{C^{\text{min}}_1} \lVert g - g' \rVert_{\infty}$ .
\end{lemma}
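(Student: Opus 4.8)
The plan is to reduce the statement about the minima $\alpha^g$ and $\alpha^{g'}$ to a pointwise (edge‑by‑edge) estimate, using the elementary fact that $\bigl\lvert \min_{e\in E} a_e - \min_{e\in E} b_e \bigr\rvert \le \max_{e\in E}\lvert a_e - b_e\rvert$ for any two finite families of real numbers indexed by the same set. Applying this with $a_e = (\widehat{W}^g_E(e))^{-1}$ and $b_e = (\widehat{W}^{g'}_E(e))^{-1}$, it suffices to prove that $\bigl\lvert (\widehat{W}^g_E(e))^{-1} - (\widehat{W}^{g'}_E(e))^{-1}\bigr\rvert \le \frac{1}{C^{\text{min}}_1}\lVert g - g'\rVert_{\infty}$ for every edge $e\in E$.

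For a fixed edge $e$, write $(\widehat{W}^g_E(e))^{-1} = 1/\sum_j (g_1(s_j)+1)c^1_{i_e j}$ as before, and combine the two reciprocals over a common denominator to obtain $\bigl\lvert (\widehat{W}^g_E(e))^{-1} - (\widehat{W}^{g'}_E(e))^{-1}\bigr\rvert = \bigl\lvert \sum_j c^1_{i_e j}\,(g_1(s_j)-g'_1(s_j))\bigr\rvert \big/ \bigl[(\sum_j (g_1(s_j)+1)c^1_{i_e j})(\sum_j (g'_1(s_j)+1)c^1_{i_e j})\bigr]$. This is exactly the computation already carried out in the derivation of inequality~(\ref{eq:1_part}), specialized to a single edge and stripped of the interpolation factor. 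Bounding the numerator by $\lVert g_1 - g'_1\rVert_{\infty}\sum_j c^1_{i_e j} \le \lVert g - g'\rVert_{\infty}\sum_j c^1_{i_e j}$ (since $g_1$ is a subvector of $g$), and bounding each factor of the denominator from below by $\sum_j c^1_{i_e j}\ge C^{\text{min}}_1$ — using the non‑negativity of influence vectors, exactly as in the bound $\alpha^g \le \frac{1}{C^{\text{min}}_1}$ established just above — gives $\bigl\lvert (\widehat{W}^g_E(e))^{-1} - (\widehat{W}^{g'}_E(e))^{-1}\bigr\rvert \le \frac{\sum_j c^1_{i_e j}}{(\sum_j c^1_{i_e j})^2}\lVert g-g'\rVert_{\infty} = \frac{1}{\sum_j c^1_{i_e j}}\lVert g-g'\rVert_{\infty} \le \frac{1}{C^{\text{min}}_1}\lVert g - g'\rVert_{\infty}$.

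Taking the maximum over $e\in E$ and invoking the $\min$‑Lipschitz inequality from the first step then yields $\lvert \alpha^g - \alpha^{g'}\rvert \le \frac{1}{C^{\text{min}}_1}\lVert g - g'\rVert_{\infty}$, as claimed. There is no serious obstacle here: the only points requiring care are (i) that replacing two families by their minima cannot increase the sup discrepancy, and (ii) that the denominators are uniformly bounded below by $C^{\text{min}}_1$ — both of which have already appeared in the surrounding argument. The lemma will subsequently be used to control the term $\bigl\lvert \alpha^g\,\rho(\widehat{W}^g_V(a)+\widehat{W}^g_V(b)) - \alpha^{g^*}\,\rho(\widehat{W}^{g^*}_V(a)+\widehat{W}^{g^*}_V(b))\bigr\rvert$ appearing in the estimate of $\sum_{e\in p^g}\lvert L^g(e) - L^{g^*}(e)\rvert$, by splitting it via the triangle inequality into a piece handled by this lemma (difference of the $\alpha$'s, with $\rho$ bounded by $1$) and a piece handled by the Lipschitz continuity of $\rho$ together with a bound on $\lvert \widehat{W}^g_V - \widehat{W}^{g^*}_V\rvert$ in terms of $C^{\text{max}}_0$.
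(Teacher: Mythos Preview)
Your proof is correct and follows essentially the same approach as the paper: both establish the identical pointwise estimate $\bigl\lvert (\widehat{W}^g_E(e))^{-1} - (\widehat{W}^{g'}_E(e))^{-1}\bigr\rvert \le \frac{1}{C^{\text{min}}_1}\lVert g - g'\rVert_{\infty}$ for every edge and then pass to the minima. The only cosmetic difference is that you invoke the general inequality $\bigl\lvert \min_e a_e - \min_e b_e\bigr\rvert \le \max_e \lvert a_e - b_e\rvert$ directly, whereas the paper re-derives this special case via a short contradiction argument; your packaging is slightly cleaner but the content is the same.
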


\begin{proof}
Observe that for any edge $e \in E$,
\begin{align}
\lvert (\widehat{W}^g_E(e))^{-1} - (\widehat{W}^{g'}_E(e))^{-1} \rvert & = \left\vert \frac{\sum_j c^1_{i_ej}(g_1-g'_1)(t_j)}{\left( \sum_j (g_1(s_j)+1) c^1_{i_ej}\right) \left( \sum_j (g'_1(t_j)+1) c^1_{i_ej}\right)} \right\rvert \nonumber \\ 
& \leq \left\vert \frac{\left( \sum_j c^1_{i_ej} \right) \lVert g-g'\rVert_{\infty} } {\left( \sum_j  c^1_{i_ej}\right) \left( \sum_j c^1_{i_ej}\right)} \right\rvert
= \frac{ 1 } { \left( \sum_j c^1_{i_ej}\right)} \lVert g-g'\rVert_{\infty} \nonumber\\
& \leq \frac{1}{C^{\text{min}}_1}\lVert g-g'\rVert_{\infty}.\label{eq:Lipschit alpha^g}
\end{align}

Assume that $\lvert \alpha^g - \alpha^{g'}\rvert > \frac{1}{C^{\text{min}}_1}\lVert g-g'\rVert_{\infty}$.
Without a loss of generality, suppose $\alpha^g > \alpha^{g'}$.
Because the set of edges $E$ is finite, there exist edges $e_1, e_2 \in E$ such that $\alpha^g=(\widehat{W}^g_E(e_1))^{-1}$ and $\alpha^{g'}=(\widehat{W}^{g'}_E(e_2))^{-1}$.
From the inequality \ref{eq:Lipschit alpha^g}, we derive that $\lvert \alpha^{g'} - (\widehat{W}^g_E(e_2))^{-1} \rvert \leq \frac{1}{C^{\text{min}}_1} \lVert g-g' \rVert_{\infty}$, leading to $(\widehat{W}^g_E(e_2))^{-1} \leq \alpha^{g'} + \frac{1}{C^{\text{min}}_1} \lVert g-g' \rVert_{\infty} $.
However, by assumption, we obtain $\alpha^{g'} + \frac{1}{C^{\text{min}}_1} \lVert g-g' \rVert_{\infty} < \alpha^g$. This implies $(\widehat{W}^g_E(e_2))^{-1} < \alpha^g$, which contradicts the definition of $\alpha^g$ as the minimum value across all edges $e \in E$ for a given $g$.    
\end{proof}

For any edge $e = (a,b) $, if we write $\rho(\widehat{W}^g_V(a) + \widehat{W}^g_V(b))$ as $D(g,e)$, then 
\[
 \sum_{e \in p^g} \Big\lvert \alpha^g( \rho(\widehat{W}^g_V(a) + \widehat{W}^g_V(b)) - \alpha^{g^*}( \rho(\widehat{W}^{g^*}_V(a) + \widehat{W}^{g^*}_V(b)) \Big\rvert  = \sum_{e \in p^g} \Big\lvert \alpha^gD(g,e)  - \alpha^{g^*}D(g^*,e)  \Big\rvert.
\]
The vertex weight $\widehat{W}^g_V(v)$ can be expressed as $\widehat{W}^g_V(v) = \sum_j g_0(r_j)c^0_{i_v j}$, where $i_v$ designates the position of vertex $v$ in the rows of the zeroth count matrix $C_0 = (c^0_{ij})$.
Because $\rho$ is Lipschitz continuous, there exists a positive constant $k$ such that $\lvert \rho(z) - \rho(z')\rvert \leq k \lvert z - z' \rvert $ for any two real numbers $z$ and $z'$.
Hence, 
\begin{align*}
    \left\lvert  D(g,e) -D(g^*,e) \right\rvert & = \left\lvert \rho(\widehat{W}^g_V(a) + \widehat{W}^g_V(b)) - \rho(\widehat{W}^{g^*}_V(a) + \widehat{W}^{g^*}_V(b))  \right\rvert \\
    & \leq k \left\lvert (\widehat{W}^g_V(a) +\widehat{W}^g_V(b)) - (\widehat{W}^{g^*}_V(a) + \widehat{W}^{g^*}_V(b)) \right\rvert \\
    & = k \left\lvert \left(\sum_{j}c^0_{i_a j} (g_0-g^*_0)(s_j) + \sum_{j}c^0_{i_b j} (g_0-g^*_0)(s_j)\right) \right\rvert  \\
    & \leq k \lVert g - g^* \rVert_{\infty} \sum_{j} (c^0_{i_a j} + c^0_{i_b j}) \\
    & \leq 2 k C^{\text{max}}_0  \lVert g - g^*\rVert_{\infty} \;\; \text{ by } C^{\text{max}}_0 = \max_{v \in V} \sum_{j} c^0_{i_v j}. 
\end{align*}
Therefore, we have 
\begin{align}
\sum_{e \in p^g} \Big\lvert \alpha^gD(g,e)  - \alpha^{g^*}D(g^*,e)  \Big\rvert   & 
= \sum_{e \in p^g} \Big\lvert \alpha^gD(g,e) - \alpha^gD(g^*,e) + \alpha^gD(g^*,e)- \alpha^{g^*}D(g^*,e)  \Big\rvert \nonumber\\
& \leq \sum_{e \in p^g}\lvert \alpha^g \rvert \left\lvert  D(g,e) -D(g^*,e) \right\rvert + \sum_{e \in p^g}\lvert \alpha^g - \alpha^{g^*}\rvert \left\lvert D(g^*,e) \right\rvert \nonumber\\
& \leq \sum_{e \in p^g}\frac{1}{C^{\text{min}}_1}\left\lvert  D(g,e) -D(g^*,e) \right\rvert +  \sum_{e \in p^g}\lvert \alpha^g - \alpha^{g^*}\rvert  \nonumber\\
& \leq \sum_{e \in p^g}\frac{2}{C^{\text{min}}_1}k C^{\text{max}}_0 \lVert g-g^*\rVert_{\infty} +  \sum_{e \in p^g}\frac{1}{C^{\text{min}}_1}\lVert g-g^*\rVert_{\infty} \nonumber\\
& = (2kC^{\text{max}}_0 + 1)\frac{\lVert g-g^*\rVert_{\infty}}{C^{\text{min}}_1}  \sum_{e \in p^g} 1 \nonumber\\
& \leq (2kC^{\text{max}}_0 + 1)\frac{\lVert g-g^* \rVert_{\infty}}{C^{\text{min}}_1}(\lvert V \rvert-1) \nonumber\\
& = t_1(2kC^{\text{max}}_0 + 1)\frac{\lVert g-g' \rVert_{\infty}}{C^{\text{min}}_1}(\lvert V \rvert-1) \hspace{3mm} \text{by } g-g^*=t_1(g-g'). \label{eq:2_part}
\end{align}
If we set a constant \(C:=(2kC^{\text{max}}_0 + 1)\frac{1}{C^{\text{min}}_1} (\lvert V \rvert-1) \), then by the inequalities (\ref{eq:1_part}) and (\ref{eq:2_part}), we have 
\begin{align*}
&  \sum_{e \in p^g}\left\lvert L^g(e) - L^{g*}(e) \right\rvert  \\
& \leq \sum_{e \in p^g}\Big\lvert (\widehat{W}^g_E(e))^{-1} -(\widehat{W}^{g^*}_E(e))^{-1} \Big\rvert + \Big\lvert \alpha^g( \rho(\widehat{W}^g_V(a) + \widehat{W}^g_V(b)) - \alpha^{g^*}( \rho(\widehat{W}^{g^*}_V(a) + \widehat{W}^{g^*}_V(b)) \Big\rvert \\
& \leq t_1 \frac{1}{C^{\text{min}}_1} \lVert g - g' \rVert_{\infty} (\lvert V \rvert-1) + t_1(2kC^{\text{max}}_0 + 1)\frac{\lVert g-g' \rVert_{\infty}}{C^{\text{min}}_1}(\lvert V \rvert-1) \\
& = t_1 C  \lVert g-g' \rVert_{\infty}.
\end{align*}
Similarly, we have
\[
\sum_{e \in p^g}\left\lvert L^{g*}(e) - L^{g'}(e) \right\rvert \leq (1-t_1) C  \lVert g-g' \rVert_{\infty}.
\]
Therefore, by the inequality \eqref{eq:total}, we conclude
\[
 \lvert (h_g - h_{g'})(e_*) \rvert \leq t_1 C  \lVert g-g' \rVert_{\infty} + (1-t_1) C  \lVert g-g' \rVert_{\infty} =  C  \lVert g-g' \rVert_{\infty},
\]
which completes the proof.
\section{Applications and experiments}
\label{sec:Applications and experiments}

This section aims to demonstrate the potential applications of adjusting the influence vector in PH analysis of actual time series based on the main theorem demonstrated in Section \ref{sec:Stability theorem for influence vectors}. 
In Sections~\ref{subsec:The first method} and ~\ref{subsec:The second method}, we explore the analysis of stock and music data, respectively.

However, these analyses are not the main focus of this study. 
Instead, we aim to demonstrate that by using our proposed method of adjusting the appropriate influence vector when employing graph representations, it is possible to achieve outcomes comparable to the previous methods for some cases.
Moreover, we highlight the possibility of conducting various other analyses that were not attempted in the previous research. 
The first example aims to improve the time series analysis by adjusting the influence vector and determining its value suitable for analysis. 
For this purpose, we apply our methodology to the stock data analyzed in previous PH studies. 
The second example focuses on analyzing the effect of features on the time series data. 
For this purpose, we apply the proposed method to the music data and observe how the PH results change when the influence assigned to each feature varies.
A more detailed explanation is provided in Section~\ref{subsec:The second method}.

\subsection{Application to stock data}\label{subsec:The first method}

We aim to introduce the process of enhancing the precision in the analysis of time series data by adjusting the influence vector \( g \) based on the stability described in Theorem~\ref{Main_Thm}.
A previous study~\cite{gidea2018topological} employed four time-series, specifically S\&P 500, DJIA, NASDAQ, and Russell 2000, and treated them as the \(4\)-dimensional time series. 
In \cite{gidea2018topological}, the time series was transformed into a point cloud using the sliding window, and the persistence landscape \cite{bubenik2015statistical} was computed. 
Calculating the \( L^p \)-norm of the persistence landscape yields a real number value for anomaly scores. 
Curves were created by advancing the sliding window stepwise and repeating the same process.
In \cite{gidea2018topological}, these curves were used to detect anomalies such as the collapse of the Dotcom bubble on 03/10/2000 and the bankruptcy of Lehman Brothers on 09/15/2008.

For a given date \(t\), define \(P(t)\) as the closing price of a stock. 
Consider a time series $X$ defined as $X(t) = \log P(t+1) - \log P(t)$ for each date $t$.
Because the closing price \(P\) is already aggregated over time \(t\), we further perform an aggregation on the price by partitioning the values of \(X\) into 30 equal intervals, denoted as \(\mathcal{I} = \{ I_{1}, I_{2}, \ldots, I_{30} \} \).
We obtain a aggregated time series \(T: \mathbb{T} \rightarrow \mathcal{I}\) defined by \(T(t) = I_{k_t}\) such that \(X(t) \in I_{k_t}\) for any \(t \in \mathbb{T}\). 
The zeroth feature \((T^0_f)\) is chosen as the day of the week. 
For the first feature \((T^1_f)\), we partition the set \(\{ X(t+1) - X(t) \mid t \in \mathbb{T} \}\) into \(4\) intervals of equal length, denoted as \(\{J_1, J_2, J_3, J_4\}\). 
The time series \(\widehat{T}=(T,T_f)\) is characterized as 
\begin{enumerate}
    \item \(T(t) = I_{k_t}\) such that \(X(t) \in I_{k_t}\) for each date \(t\)
    \item The zeroth feature set $F_0$ used in \(T_f\) is $\{ \emptyset^0, \text{Mon}, \text{Tue}, \text{Wed}, \text{Thu}, \text{Fri} \}$
    \item The first feature set $F_1$ used in \(T_f\) is $\{ \emptyset^1, J_1, J_2, J_3, J_4\}$.
\end{enumerate}

Let \(\widehat{T}^i\) represent the time series obtained from the following indices: S\&P 500, DJIA, NASDAQ, and Russell 2000, for \(i = 1, 2, 3, 4\), respectively.
For each influence vector $g$, we obtain the featured time series \((\widehat{T}^i, g)\).
For a fixed date $t$, define $\widehat{T}^i|_t^w$ as the function obtained by slicing $\widehat{T}^i$ from $t$ to $t + w$, where $w$ is the window size.
Let $\text{dgm}_1(\widehat{T}^i|_t^w)$ denote the one-dimensional PD obtained for the sliced time series $\widehat{T}^i|_t^w$.

We use the persistence landscape~\cite{bubenik2015statistical} as a tool to measure the PD, defined as follows:
Let $\text{dgm}_p$ be a $p$-dimensional PD consisting of points $(b,d)$ where $b$ and $d$ represent the birth and death times of the homological features, respectively. 
First, transform each point $(b_m, d_m)$ in $\text{dgm}_p$ into a tent function,
$\phi_m(x) = \left[ \min\{x - b_m, d_m - x\} \right]^+$, where $[z]^+ = \max\{z, 0\}$ denotes the positive part of $z$. 
The persistence landscape of $\text{dgm}_p$ is defined as the sequence of functions $\lambda_k$, $k \geq 1$, where 
\[
\lambda_k(x) = k^{th} \text{ largest value of } \{\phi_m(x)\}_{m \geq 1}.
\]
If there are fewer than $k$ functions $\phi_m(x)$ at a given point $x$, we set $\lambda_k(x) = 0$. 

We define the anomaly score curve (ASC) of a single time series $P^i$ ranging from $t$ to $t+w$ as
\[
  ASC(P^i)(t) = \frac{1}{M^i}\sum_{k=1}^{\infty} \lVert  \lambda_k( \text{dgm}_1(\widehat{T}^i|_t^w) ) \rVert_{\infty},
\] where \( M^i \) is the normalization factor such that the maximum value of \( ASC(P^i)(t) \) is $1$ and $\lVert \cdot \rVert_{\infty}$ denotes the $L^{\infty}$-norm.
The total anomaly score curve (TASC) of four time-series $P^i$ ranging from $t$ to $t+w$ is defined as
\[
  TASC(t) = \frac{1}{M}\prod_{i=1}^4 ASC(P^i)(t),
\] where \( M \) is the normalization factor such that the maximum value of \( TASC(t) \) is $1$.
The rationale behind taking the product of the four $ASC(P^i)$ is to ensure that the total score approaches zero if any of them is close to zero. 
This is a cautious approach to minimize the error in the anomaly score.

According to the stability theorem for influence vectors proved in Section \ref{sec:Stability theorem for influence vectors}, slight variations in the influence vector $g$ cause subtle changes in the ASC and TASC. 
Experiments were conducted to identify $g$ that enhances the anomaly detection performance. 
The components of $g$ were altered to $0$, $10$, and $20$ to observe significant changes.
Because the zeroth and first features are always present in the stock data for each date $t$, the influence related to $\emptyset^0$ and $\emptyset^1$ was set to $0$.
In Figure~\ref{fig:Anormaly_ASC_TASC}, the date 03/10/2000 is marked as the starting point of the collapse of the Dotcom bubble, and the shaded areas are colored according to the window size from the starting point.
Similarly, 09/15/2008 corresponds to the bankruptcy of Lehman Brothers. 
The values of $\vec{g}_0$ represent the influence values for $(\emptyset^0, \text{Mon}, \text{Tue}, \text{Wed}, \text{Thu}, \text{Fri})$, whereas those of $\vec{g}_1$ correspond to $(\emptyset^1, J_1^i, J_2^i, J_3^i, J_4^i)$.

\begin{figure}[ht]
\centering
\includegraphics[width=\linewidth]{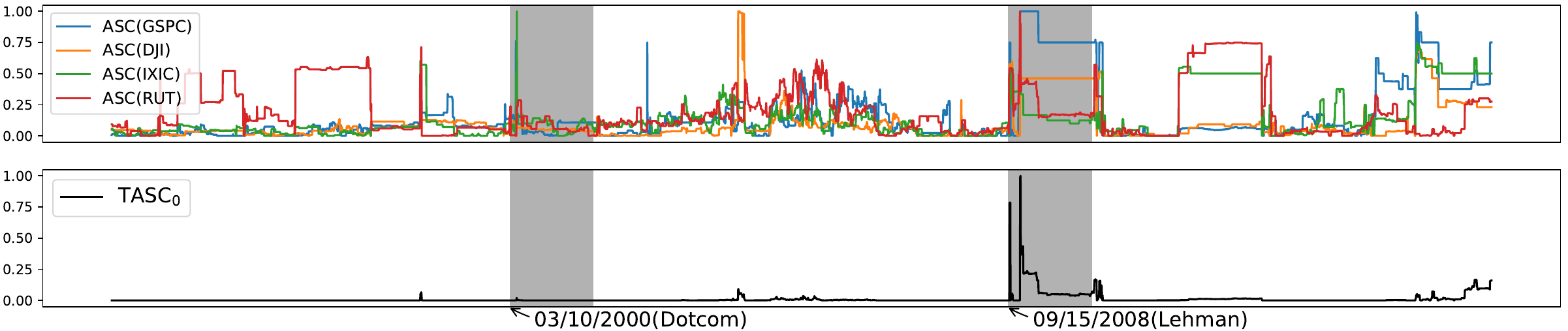}
\caption{The first row presents the ASC for each of the S\&P 500 (GSPC), DJIA (DJI), NASDAQ (IXIC), and Russell 2000 (RUT) indices, with \(\vec{g_0} = \vec{0}\), \(\vec{g_1} = \vec{0}\), and the window size $w=360$. 
The second row shows the TASC$_0$ derived from these four ASCs.}
\label{fig:Anormaly_ASC_TASC}
\end{figure}

As shown in the second row of Figure~\ref{fig:Anormaly_ASC_TASC}, the TASC$_0$ successfully detects the Lehman crisis but fails to detect the Dotcom bubble collapse.

\begin{figure}[ht]
\centering
\includegraphics[width=\linewidth]{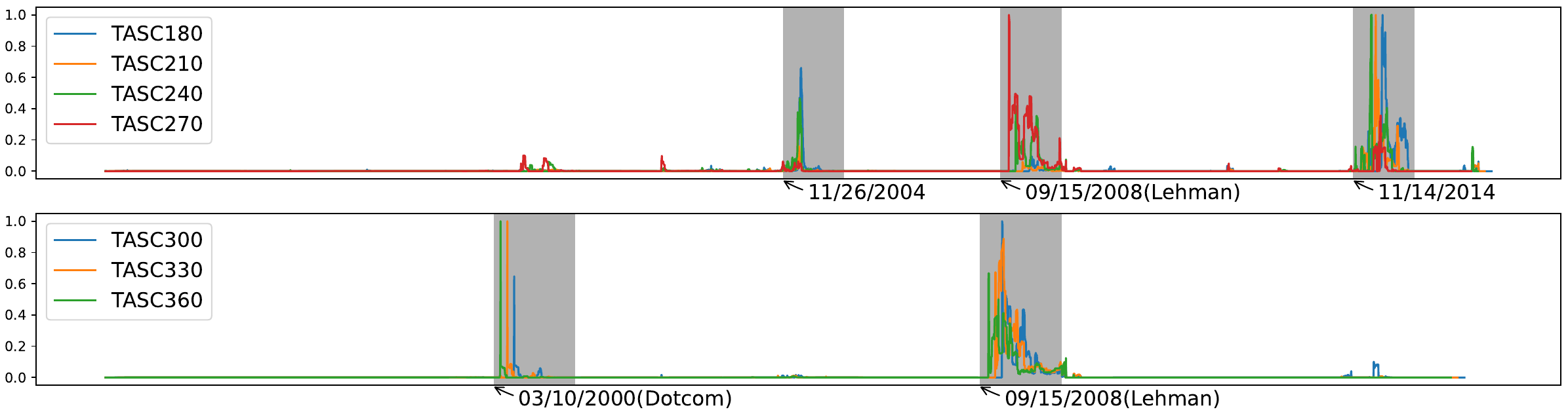}
\caption{TASCs for varying window sizes \(w\) with \(\vec{g_0} = (0,20,0,0,0,0)\) and \(\vec{g_1} = (0,0,10,0,20)\) for each $w \in \{ 180, 210, 240, 270, 300, 330, 360 \}$.}
\label{fig:Anormaly_along_windowsize_good}
\end{figure}

Figure~\ref{fig:Anormaly_along_windowsize_good} shows the TASC results for various window sizes \(w\). 
For window sizes \(w = 180, 210, 240, \text{ and } 270\), the TASCs fail to detect the Dotcom bubble. 
With larger window sizes of \(300, 330, \text{ and } 360\), the TASCs identify the Dotcom bubble area.
For window sizes of \(w = 180, 210, \text{ and } 240\), high anomaly scores are observed in the regions starting at 11/26/2004 and 11/14/2014. 
Increasing the window size to \(270, 300, 330, \text{ and } 360\) results in the disappearance of high anomaly scores in these regions. 
This suggests that the Dotcom and Lehman crises, which have led to sustained a global economic crisis, are predominantly measured when the window size is extended.

\begin{figure}[ht]
\centering
\includegraphics[width=\linewidth]{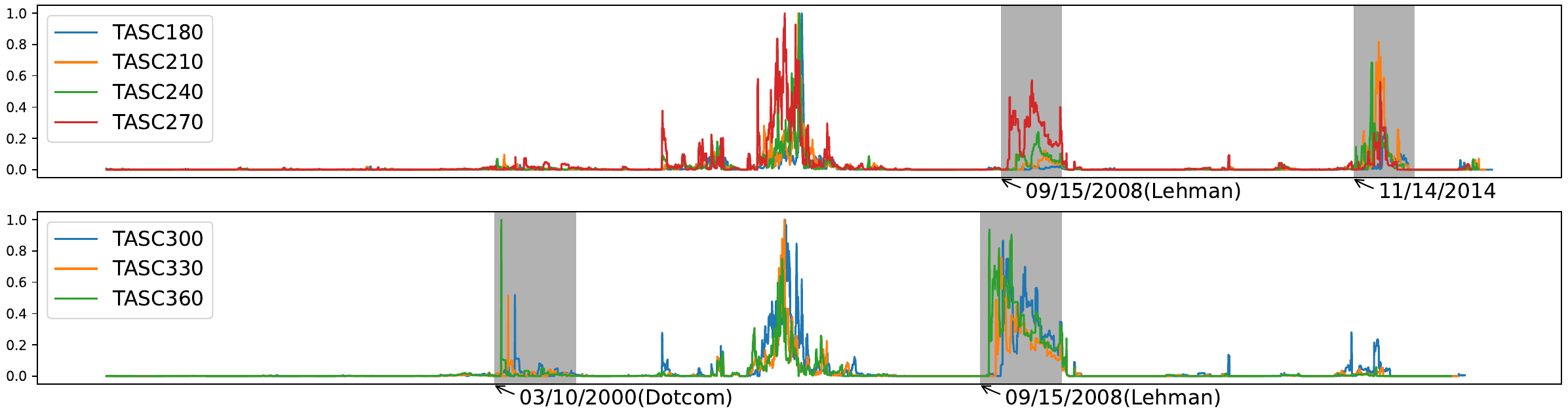}
\caption{TASCs for varying window sizes \(w\) with \(\vec{g_0} = \vec{0} \) and \(\vec{g_1} = (0,0,10,0,20)\) for each $w \in \{ 180, 210, 240, 270, 300, 330, 360 \}$.}
\label{fig:Anormaly_along_windowsize_bad}
\end{figure}

Figure~\ref{fig:Anormaly_along_windowsize_bad} illustrates the outcomes of the same experiment but with \(\vec{g_0}\) set as the zero vector. 
The anomaly scores in the shaded areas resemble those in Figure~\ref{fig:Anormaly_along_windowsize_good}, but noisier anomalies were observed in the mid-region, as shown in Figure~\ref{fig:Anormaly_along_windowsize_bad}.
One approach to analyze the stock volatility is to examine the prices only for a specific day of the week and analyze them weekly.
This approach reduces short-term, repetitive noise and focuses more on long-term trends and patterns. 
In our methodology, by setting \(\vec{g_0} = (0,20,0,0,0,0)\), we emphasize the information for Monday to enable a focused analysis of the effect of this particular day of the week.
Similar results are observed when focusing on Tuesday, Wednesday, Thursday, and Friday.
As demonstrated here, the proposed method generates various domain-specific and interpretable results by adjusting both $ g_0$ and $ g_1$.

\subsection{Application to music data}\label{subsec:The second method}

Music data are time series data containing information beyond pitch and rhythm.
Although music is a challenging form of art to quantify, the authors of \cite{tran2023topological} analyzed the hidden topological information in traditional Korean music.
We analyze the Celebrated Chop Waltz as a simple example, where each note is played simultaneously with both the right and left hands.
Define the pair of pitches played simultaneously using the left and right hands as a \textit{pitch pair}.
We define the time series $T$ as $T(t_m) = (P(t_m), D(t_m))$, where $P(t_m)$ is a pitch pair and $D(t_m)$ is the playing length of each note played at the $m$th timestamp $t_m$.
For instance, in the left part of Figure~\ref{fig:musical notation}, the pitch pair $(F, G)$ is repeated six times with an eighth note duration.

\begin{figure}[ht]
\centering
\includegraphics[width=0.8\linewidth]{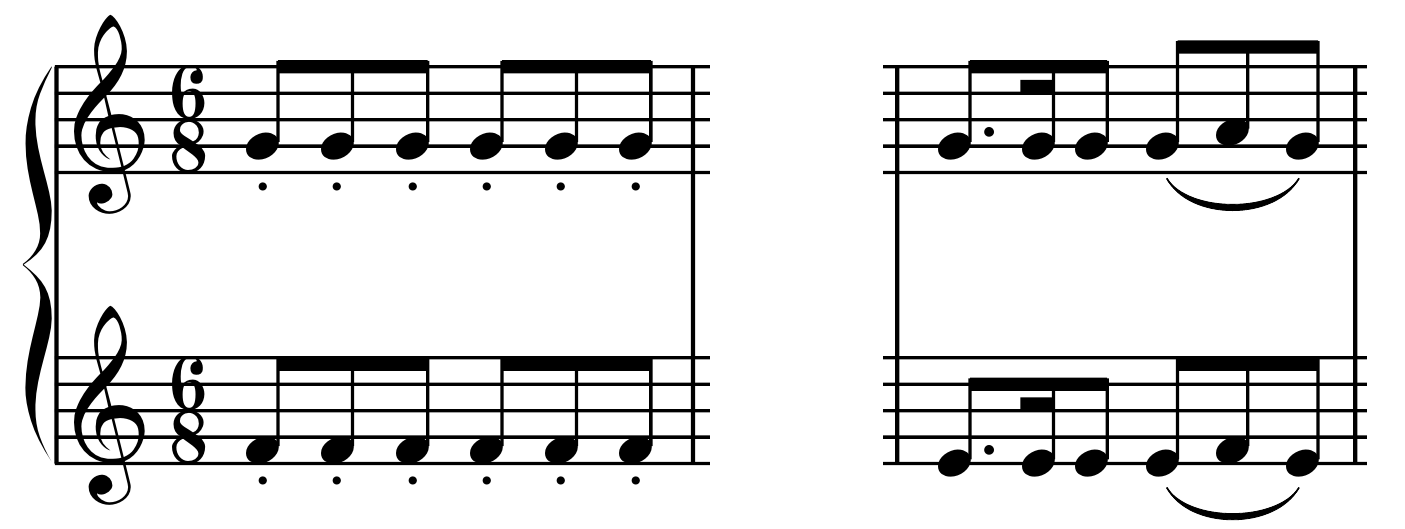}
\caption{Musical notations used in the Celebrated Chop Waltz. (Left) When a staccato symbol is attached, the note is played briefly as if it were popping. 
    (Right) If multiple notes are tied together with a slur symbol, these notes are played smoothly and connectedly.}
\label{fig:musical notation}
\end{figure}

As shown in Figure~\ref{fig:musical notation}, the Celebrated Chop Waltz contains staccato and slur musical notations. 
We aim to understand how the changes in the influence of these musical notations affect the results of PH calculations.
Staccato is a technique in which notes are played very briefly.
Slur indicates that two distinct notes should be played as if they are connected smoothly. 
We define staccato related to a single note as the zeroth feature and slur related to two or more notes as the first feature.

The time series $\widehat{T}=(T,T_f)$ is characterized as
\begin{enumerate}
    \item $T(t_m) = (P(t_m), D(t_m))$ at the timestamp $t_m$.
    \item The zeroth feature set $F_0$ used in \(T_f\) is $\{ \emptyset^0, \text{ staccato} \}$.
    \item The first feature set $F_1$ used in \(T_f\) is $\{ \emptyset^1, \text{ slur} \}$.
\end{enumerate}

\begin{figure}[ht]
    \centering
    \subfloat[\label{fig:The longest persistence of barcode auto}]{
        \includegraphics[width=0.45\linewidth]{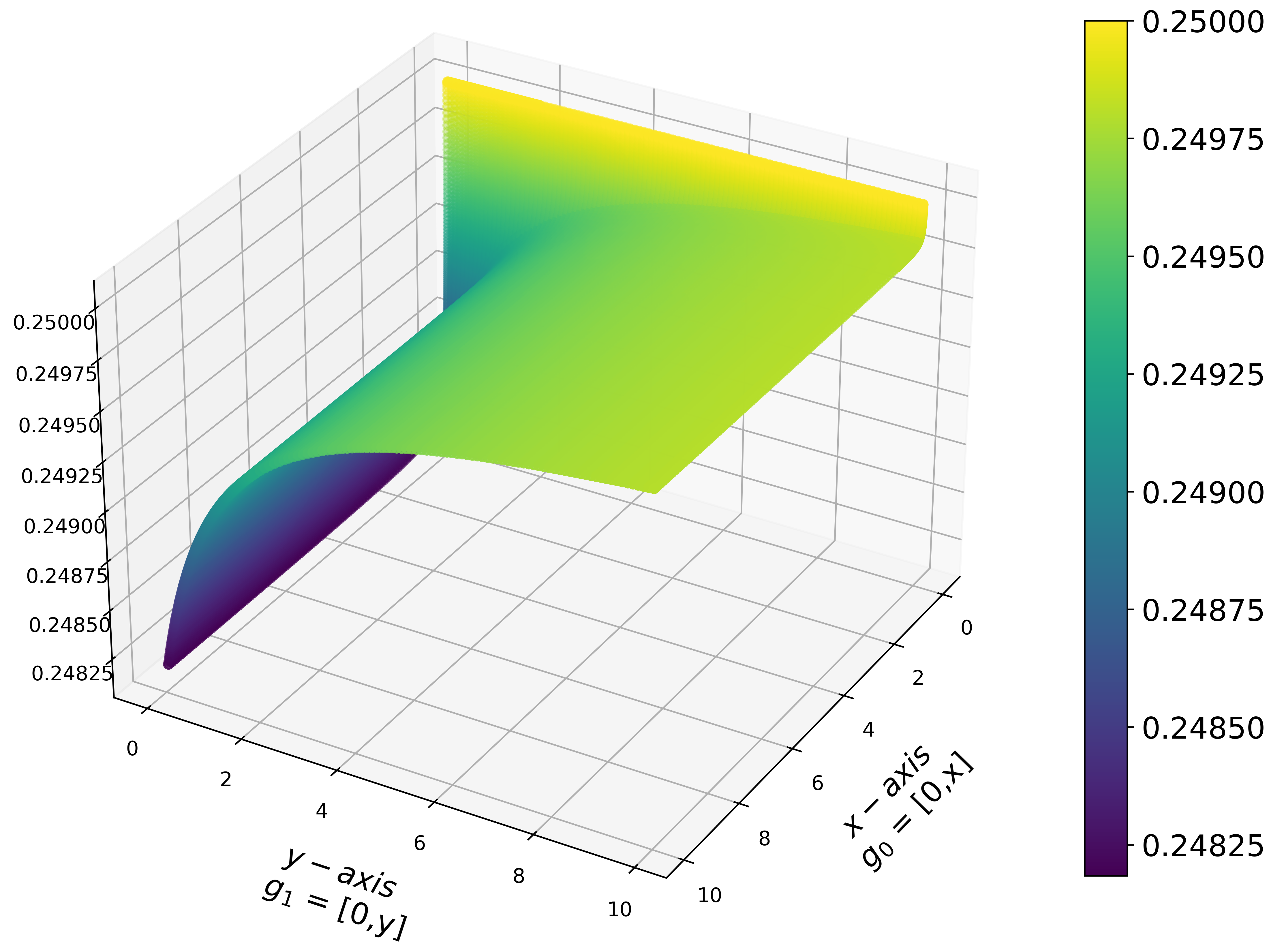}
    }
    \hspace{2mm}
    \subfloat[\label{fig:The shortest persistence of barcode auto}]{
        \includegraphics[width=0.45\linewidth]{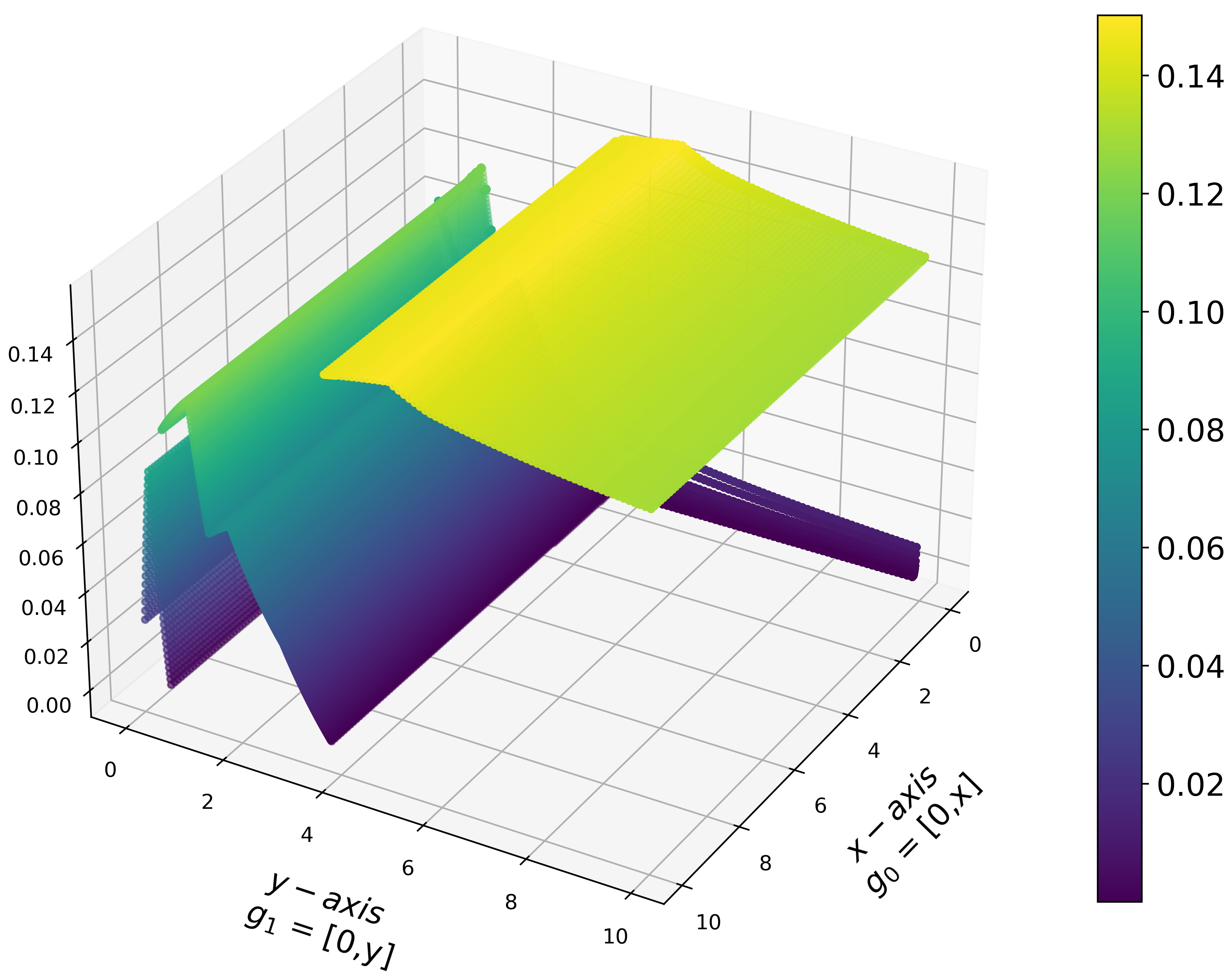}
    }\\
    \subfloat[\label{fig:The number of barcode auto}]{
        \includegraphics[width=0.45\linewidth]{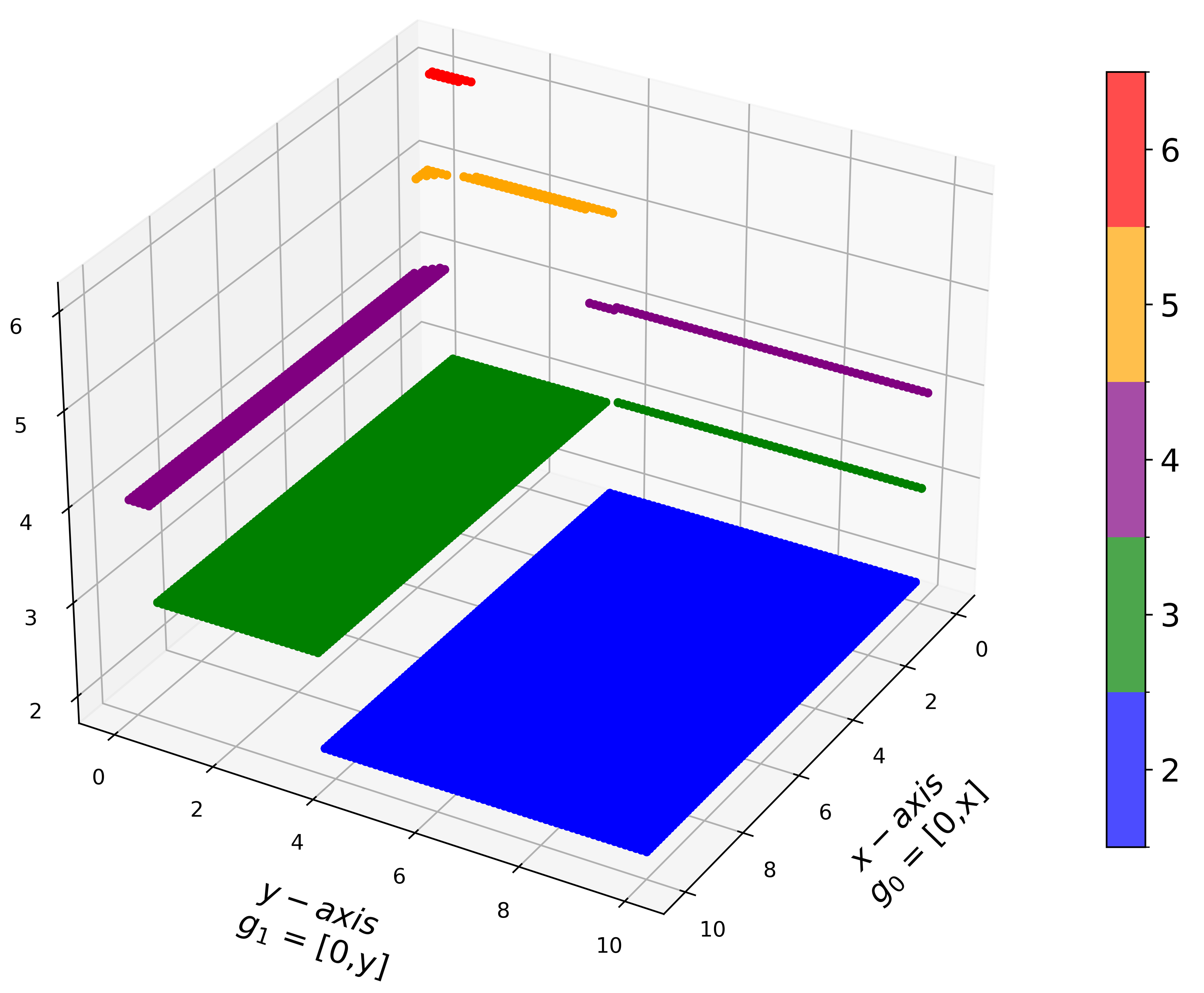}
    }
    \hspace{2mm}
    \subfloat[\label{fig:infnorm_dgm1 auto}]{
        \includegraphics[width=0.45\linewidth]{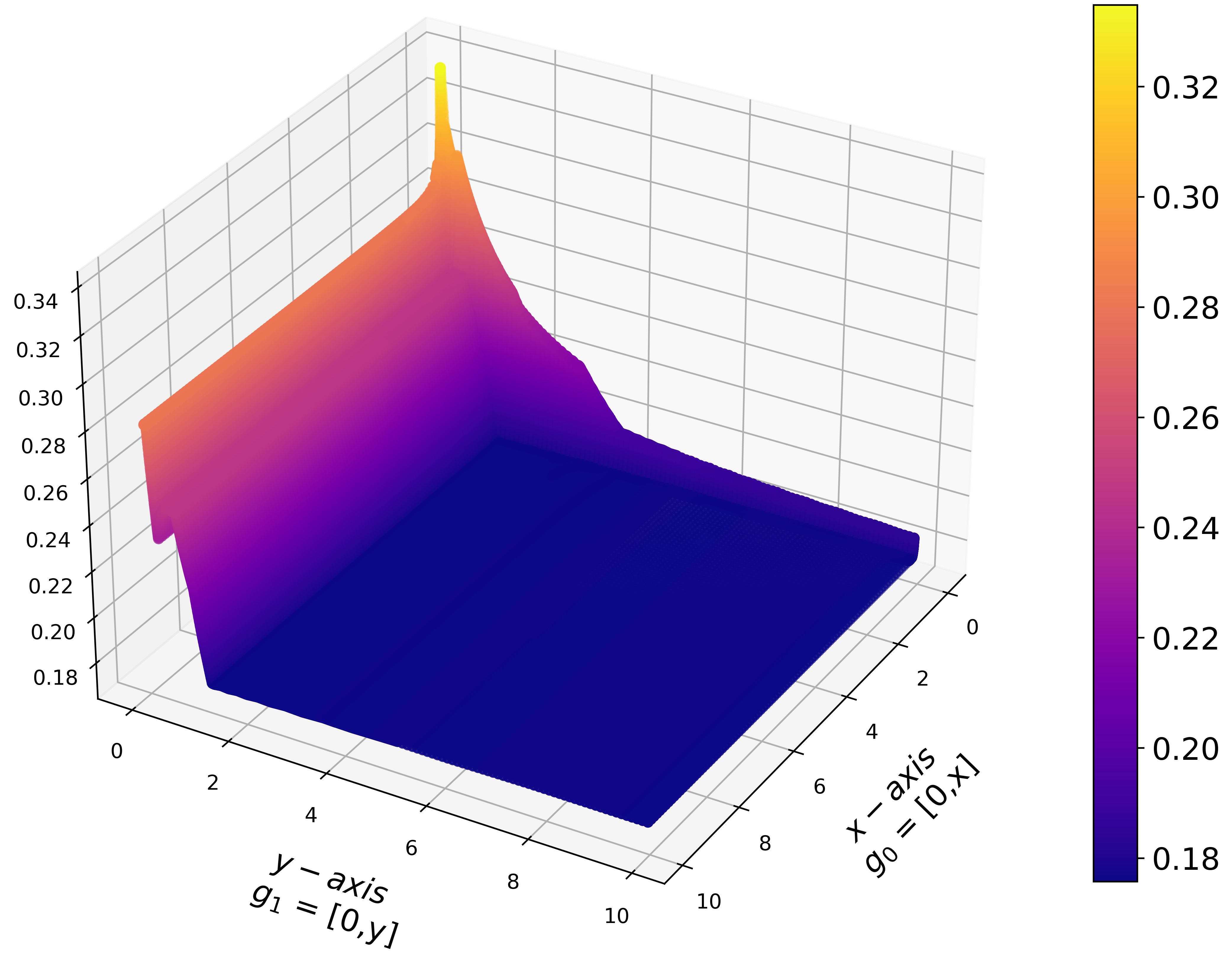}
    }
    \caption{Topological information about the one-dimensional persistence diagrams (PDs) while the influence vectors $g$ of the staccato and slur are varied.
    \textup{(a)} The longest persistence among the elements of the one-dimensional PD as $g$ is varied.
    \textup{(b)} The shortest persistence among the elements of the one-dimensional PD as $g$ is varied.
    \textup{(c)} The total number of points in the one-dimensional PD as $g$ is varied.
    \textup{(d)} The $L^1$-norm of the landscape of the one-dimensional PD as $g$ is varied.}
    \label{fig:PH_music_experiment}
\end{figure}

We conducted experiments to investigate the effects of changes in influence values on the resulting PDs. 
Figure~\ref{fig:PH_music_experiment} shows the topological information about the one-dimensional PD, $\text{dgm}_1$, as influence values for the staccato and slur vary.
In Figure~\ref{fig:PH_music_experiment}, the $x$-axis represents the cases where $\vec{g_0}$ is $(0,x)$, which corresponds to the influence of the staccato being $x$. 
The $y$-axis signifies when $\vec{g_1}$ is $(0,y)$, indicating the value of the slur as $y$. 
Figures~\ref{fig:The longest persistence of barcode auto} and~\ref{fig:The shortest persistence of barcode auto} plot the longest and shortest persistence among all points in $\text{dgm}_1$, respectively.
Here, the persistence is defined as \( d - b \) for each point $(b, d) \in \text{dgm}_1(\vec{g_0},\vec{g_1})$. 
The function $z = z(x,y)$ in Figure~\ref{fig:The longest persistence of barcode auto} is 
\[
z(x,y) = \max \{ d-b | \; (b, d) \in \text{dgm}_1(\vec{g_0},\vec{g_1}) \} \text{ with } \vec{g_0}=(0,x) \text{ and } \vec{g_1}=(0,y)
\] and that in Figure~\ref{fig:The shortest persistence of barcode auto} is
\[
z(x,y) = \min \{ d-b | \; (b, d) \in \text{dgm}_1(\vec{g_0},\vec{g_1}) \} \text{ with } \vec{g_0}=(0,x) \text{ and } \vec{g_1}=(0,y).
\]
Figure~\ref{fig:The number of barcode auto} shows the total number of points in $\text{dgm}_1(\vec{g_0},\vec{g_1})$. 
The function $z = z(x,y) $ in Figure~\ref{fig:The number of barcode auto} is defined as 
\[
z(x,y) = \lvert \text{dgm}_1(\vec{g_0},\vec{g_1}) \rvert \text{ with } \vec{g_0} = (0,x) \text{ and } \vec{g_1}=(0,y),
\]
where $\lvert \; \cdot \; \rvert $ represents the number of elements in the multiset.
Figure~\ref{fig:infnorm_dgm1 auto} illustrates the sum of the $L^{\infty}$-norm of the $k$-th persistence landscape $\lambda_k$ of $\text{dgm}_1(\vec{g_0},\vec{g_1})$, which can be considered as information related to the overall total persistence.
The function $z = z(x,y)$ in Figure~\ref{fig:infnorm_dgm1 auto} is 
\[
z(x,y) =  \sum_{k=1}^{\infty} \lVert\lambda_k(\text{dgm}_1(\vec{g_0},\vec{g_1})) \rVert_{\infty} \text{ with } \vec{g_0}=(0,x) \text{ and } \vec{g_1}=(0,y).
\]
The continuity of the surface observed in Figure~\ref{fig:infnorm_dgm1 auto} is a consequence of Theorem~\ref{Main_Thm}.

From Figure~\ref{fig:The number of barcode auto}, we observe that when $(x,y)=(0,0)$, the number of points in $\text{dgm}_1$ is the highest.
However, as $x$ and $y$ increase, the number of points gradually decreases. 
If \(x\) and \(y\) are sufficiently large, only the two points with longer persistence remain.
The disappearing points with smaller persistence cause the discontinuities shown in Figure~\ref{fig:The shortest persistence of barcode auto}.
In Figures~\ref{fig:The longest persistence of barcode auto} and~\ref{fig:The shortest persistence of barcode auto}, the variation in the shortest persistence is greater than that in the longest persistence. 
Therefore, the changes in Figure~\ref{fig:infnorm_dgm1 auto} are primarily due to the variation of the points with shorter persistence.
This suggests that points with shorter persistence can provide important information about the overall topological structure of the music.
The authors of \cite{tran2023topological} also considered all points representing the music's hidden structure as important, regardless of the persistence. 
From this perspective, when the influences of staccato and slur are zero, the highest number of hidden topological structures of the music are observed.
However, when the influences of staccato and slur are sufficiently large, only two structures remain.
In summary, starting with six points when $(x,y)=(0,0)$, changes in the $x$ and $y$ values cause points with relatively shorter persistence to vary dramatically, with up to four disappearances. 
This illustrates how the overall PH outcome changes with variations in the influence of staccato and slur.

\begin{figure}[ht]
    \centering
    \includegraphics[width=0.5\linewidth]{./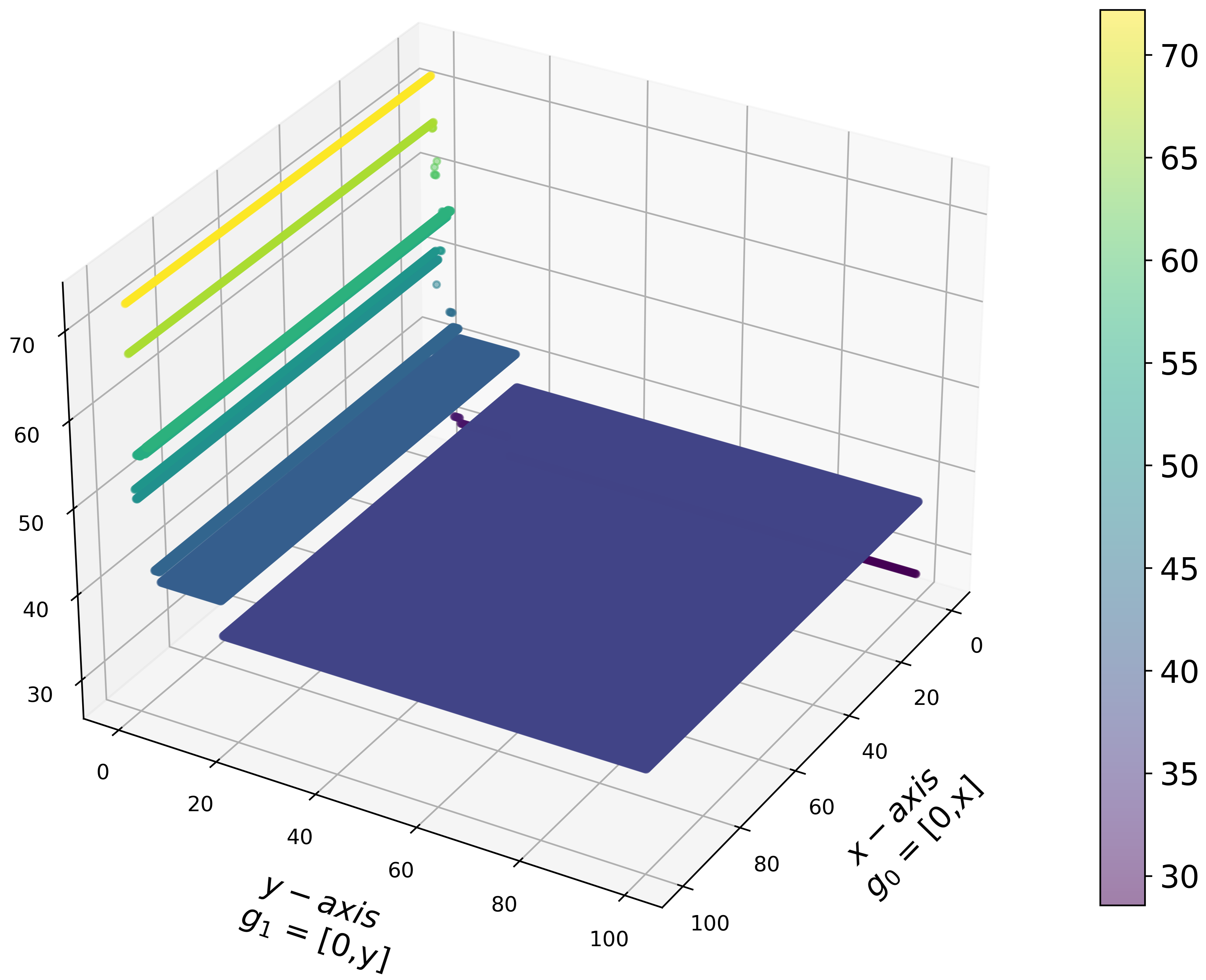}
    \caption{The z-axis represents the overlapping percentage for each influence vector.}
    \label{fig:The overlapping_percentage}
\end{figure}

In \cite{tran2023topological}, the authors analyzed the hidden topological structures in music using the cycle representatives of each point in $\text{dgm}_1$. 
They introduced the \textit{overlapping percentage} to quantify the topological repetitiveness in music. 
Assume that $\text{dgm}_1$ contains $m$ points $[b_1,d_1], \ldots, [b_m, d_m]$. 
For each point $[b_i,d_i]$, we fix a corresponding one-dimensional cycle representative $\sigma_i \subseteq V$. 
Although $\sigma_i$ is generally not unique, the authors of \cite{tran2023topological} used the JavaPlex algorithm \cite{adams2014javaplex} to select $\sigma_i$. 
The chosen cycle representatives $\sigma_i$ correspond closely with the $2$-simplex, which directly results in the disappearance of the $1$-cycle.
For a music time series $T: \mathbb{T} \rightarrow \mathcal{X}$, define 
\[
N_c = \big\lvert \{ t \in \mathbb{T} \mid T(t) \in \bigcup_{i=1}^m \sigma_i \} \big\rvert \;\text{ and }\;  
N_s = \big\lvert \{ t \in \mathbb{T} \mid T(t) \in \bigcup_{1 \le i < j \le m} (\sigma_i \cap \sigma_j) \} \big\rvert.
\]
Then, the \textit{overlapping percentage} is defined as 
\(
(N_s/N_c) \times 100 (\%)
\).
This value represents the percentage of timestamps in a time series that belongs simultaneously to multiple cycle representatives. 
Figure~\ref{fig:The overlapping_percentage} shows the overlapping percentage obtained by varying the influence vector $g$.
In Figure~\ref{fig:The overlapping_percentage}, the overlapping percentage increases for the staccato influence $x$ when $x>0$ than $x=0$. 
This suggests that the presence of staccato adds greater topological repetitiveness than its absence; however, increasing the value does not necessarily enhance it further. 
Moreover, as the influence of the slur $y$ increases, the overlapping percentage decreases. Slurs appear less frequently in scores than staccato. 
Emphasizing the slurs gradually decreases the overlapping percentage, reducing the topological repetitiveness.

In this section, we described the effect of adjusting the influence vector $g$ on the overall PH outcome. 
This analysis allows us to examine the impact of features on the time series.
As shown in this section, the proposed method yields a more flexible analysis by varying $g$ from the music data.
\section{Conclusion}\label{sec:Conclusion}
In this paper, we introduced a novel concept of the featured time series data and proposed a method based on persistent homology with varying influence vectors. 
With the featured time series and influence vectors, 
the proposed method allows customizing PH calculations to reflect domain-specific knowledge effectively. 
Our approach enhances the adaptability of domain-specific PH analysis across different types of time series data and ensures the stability of the PH calculations.
The application of our methodology to real-world datasets, such as stock data for anomaly detection and musical data for feature impact analysis, has shown promising results. 
These applications demonstrate the practical feasibility and effectiveness of our proposed method. 
Future research will focus on extending this framework to other applications and exploring the optimization method of the influence vectors that can further refine the accuracy and applicability of PH under various settings. 

\bibliographystyle{unsrt}  
\bibliography{references} 

\end{document}